\title{Betweenness Centrality in Some Classes of Graphs }
\author{\small {Sunil~Kumar R\thanks{ Under the Faculty Development Program of the University Grants Commission, Government of India.}}\\
        \small {Research~Scholar}\\
        \small{Department of Computer~Applications}\\
        \small{Cochin University of Science and Technology}\\
        \small{ e-mail:sunilstands@gmail.com} \and
        \and
             \small{Kannan Balakrishnan}\\
         \small{Associate~Professor}\\
           \small{Department of Computer~Applications}\\
           \small{Cochin University of Science and Technology}\\
           \small{ e-mail:mullayilkannan@gmail.com
                     }\and
                     \small{M.\,Jathavedan}\\
         \small{Professor Emeritus}\\
           \small{Department of Computer~Applications}\\
           \small{Cochin University of Science and Technology}\\
           \small{ e-mail:mjvedan@gmail.com}
                     }
           \date{ }           
\newtheorem{Theorem}{Theorem}[section]
\newtheorem{Corollary}{Corollary}[section]
\newtheorem{Ex}{Example}
\begin{document}
\maketitle

\begin{abstract} 
There are several centrality measures that have been introduced and studied for real world networks. They account for the different vertex characteristics that permit them to be ranked in order of importance in the network. Betweenness centrality is a measure of the influence of a vertex over the flow of information between every pair of vertices under the assumption that information primarily flows over the shortest path between them. In this paper we present betweenness centrality of some important classes of graphs.
\vspace{.2cm}\\
\textbf{Keywords}:  betweenness centrality measures,   perfect matching, dual graph, interval regular, branch of a tree, antipodal vertices, tearing.
\end{abstract} 
 \section{Introduction}
Betweenness centrality plays an important role in analysis of social networks \cite{otte2002social,
  park2010social}, computer networks \cite{latora2007measure} and many other types of network data models \cite{estrada2006virtual,freeman1979centrality,koschutzki2008centrality,
  martin2010centrality,newman2001scientific,rubinov2010complex}. 
   
 In the case of communication networks the distance from other units is not the only important property of a unit. More important is which units lie on the shortest paths (geodesics) among pairs of other units. Such units have control over the flow of information in the network. Betweenness centrality is useful as a measure of the potential of a vertex for control of communication. Betweennes centrality \cite{bonacich1987power,borgatti2005centrality,borgatti2006graph,estrada2009communicability,
gago2013betweenness} indicates the betweenness of a vertex in a network and it measures the extent to which a vertex lies on the shortest paths between pairs of other vertices. In many real-world situations it has quite a significant role.

Determining betweenness is simple and straightforward when only one 
geodesic connects each pair of vertices, where the 
intermediate vertices can completely control communication between 
pairs of others. But when there are several geodesics connecting a pair of 
vertices, the situation becomes more complicated and the control of the intermediate vertices get fractionated. 
  \section{Background}
  The concept of betweenness centrality was first introduced by Bavelas in 1948 \cite{bavelasa}. The 
importance of the concept of vertex 
centrality is in the potential of a vertex for 
control of information flow in the network. 
Positions are viewed as structurally central to 
the degree that they stand between others and 
can therefore facilitate, impede or bias the 
transmission of messages. 
 Linton C. Freeman in his papers    \cite{freeman1977set,freeman1979centrality} classified betweenness centrality into three. 
 The three measures includes two indexes of vertex centrality - 
one based on counts and one on proportions - and one index of overall 
network or graph centralization. \\ 
\vspace{1mm}
\subsection{Betweenness Centrality of a Vertex}
 Betweenness centrality $C_B(v)$ for a vertex $v$ is defined as
$$C_B(v) = \sum_{s\neq{v}\neq{t}}{\frac{\sigma_{ st}(v)}{\sigma_{ st}}}$$ where $ \sigma_{st}$ is the number of shortest paths with vertices $s$ and $t$ as their end vertices, while $\sigma_{st}(v)$ is the 
number of those shortest paths that include vertex $v$ \cite{freeman1977set}.High centrality scores indicate that a vertex lies on a considerable fraction of shortest paths connecting pairs of vertices.

\begin{itemize}
\item Every pair of vertices in a connected graph provides a value lying in [0,1] to the betweenness centrality of all other vertices.
\item If there is only one geodesic joining a particular pair of vertices, then that pair provides a betweenness centrality 1 to each of its intermediate vertices and zero to all other vertices. For example in a path graph, a pair of vertices provides a betweenness centrality 1 to each of its interior vertices and zero to the exterior vertices. A pair of adjacent vertices always provides zero to all others.
\item If there are $k$ geodesics of length 2 joining a pair of vertices, then that pair of vertices provides a betweenness centrality $\frac{1}{k}$ to each of the intermediate vertices. 
\end{itemize}
 Freeman \cite{freeman1977set} proved that the maximum value taken by $C_B(v)$ is 
achieved only by the central vertex in a star as the central vertex lies  on the geodesic (which is unique) joining every pair of other vertices. In a star $S_n$ with $n$ vertices,  the betweenness centrality of the central vertex is therefore the number of such geodesics which is $\binom{n-1}{2}$. The betweenness centrality of each pendant vertex is zero since no pendant vertex lies in between any geodesic.
Again it can be seen that the betweenness centrality of any vertex in a complete graph $K_n$ is zero since no vertex lies in between any geodesic as every geodesic is of length 1. \\
\subsection{Relative Betweenness Centrality }
The betweenness centrality increases with the 
number of vertices in the network, so a normalized version is often considered with the centrality 
values scaled to between 0 and 1. Betweenness  centrality can be normalized by dividing $C_B (v)$ by its maximum value.  Among all graphs of $n$ vertices the central vertex of a star graph $S_n$ has the maximum value which is $\binom{n-1}{2}$. The relative betweenness centrality is therefore defined as
$$C_{B}{'}{(v)}=\frac{C_B(v)}{Max \,C_B(v)}=\frac{2C_B(v)}{(n-1)(n-2)}\qquad  0\leq C_B{^{'}}(v)\leq 1$$\\
\subsection{Betweenness Centrality of a Graph}
The betweenness centrality of a graph measures the tendency of a single vertex to be more central 
than all other vertices in the graph. It is based on differences between the centrality of the most central 
vertex and that of all others. Freeman \cite{freeman1977set} defined the betweenness centrality of a graph as the average difference between the measures of centrality of the most central vertex and that of all other vertices.\\

The betweenness centrality of a graph $G$ is defined as\\ 

$$C_B(G) = \frac{\sum_{i=1}^n[C_B  (v^*)-C_B (v_i)]}{Max \sum_{i=1}^n[C_B ( v^* )-C_B (v_i )])}$$ \\
where $C_B(v^*)$ is the largest value of $C_B(v_i)$ for any vertex $v_i$ in the given graph $G$ and\\
Max $\sum_{i=1}^n[C_B ( v^* )-C_B (v_i)] $ is the maximum possible sum of differences in  centrality for any graph of $n$ vertices which occur in star with the value  $n-1$ times $ C_B(v)$ of the central vertex. That is, $(n-1)\binom{n-1}{2}$.

Therefore the betweenness centrality of $G$ is defined as \\

$$C_B(G) = \frac{2\sum_{i=1}^n[C_B  (v^*)-C_B (v_i)]}{{(n-1)}^2 (n-2)}\quad or \quad C_B(G) =\frac{\sum_{i=1}^n[C_B {'} (v^*)-C_B{'} (v_i)]}{{(n-1)}}$$
The index, $C_B(G)$, determines the degree to which 
$C_B(v^*)$ exceeds the centrality of all other vertices in $G$. Since $C_B(G)$ is 
the ratio of an observed sum of differences to its maximum value, it will vary 
between 0 and 1. 
$C_B(G)$ = 0 if and only if all $C_B(v_i)$ are equal, and $C_B(G)$ = 1 if and only if one 
vertex $v*$, completely dominates the network with respect to centrality. Freeman showed that all of these measures agree in assigning the highest centrality index to the star graph and the lowest to the  complete graph.  

\begin{table}[H]	
\begin{center}
  \begin{tabular}{| c | c | c |c| }
    \hline
$G$ & $C_B(v)$ & $C_B{'}(v)$ & $C_B(G)$\\
	\hline
	$S_n$& 
$  
\begin{dcases*}\binom{n-1}{2}  & for central vertex\\
        \hspace{.6cm}0 & for other vertices
        \end{dcases*}
$&$\begin{dcases*}
        1  & for central vertex\\
        0 & for other vertices
        \end{dcases*}$&1\\ \hline
 $K_n$&0&0&0\\ \hline       
\end{tabular}
 \caption{Graphs showing extreme betweenness}
\end{center}
\end{table}
In this paper we present the  betweenness centrality measures in some important classes of graphs. 
\section{Betweenness centrality of some classes of graphs}

\subsection{Betweenness centrality of vertices in wheels }
\begin{Theorem}
The betweenness centrality of a vertex $v$ in a wheel graph $W_n,~n>5$ is given by 

\[
 C_{B}{(v)} =  \begin{dcases*}
       \frac{(n-1)(n-5)}{2}& if $v$ is the central vertex\\
       \hspace{.5cm} \frac{1}{2}&  otherwise 
        \end{dcases*}
\]
\end{Theorem}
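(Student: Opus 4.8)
The plan is to exploit the fact that, for $n>5$, the wheel $W_n$ has diameter $2$: the hub is adjacent to every rim vertex, consecutive rim vertices are adjacent, and every other pair of vertices is at distance exactly $2$. Hence a vertex can lie in the interior of a geodesic only if that geodesic has length $2$, and every length-$2$ geodesic joins two rim vertices. So the entire computation reduces to counting, for each pair of rim vertices $s,t$ at distance $2$, how many geodesics join them and which of those pass through the vertex $v$ in question; this is exactly the regime where the third bulleted observation in Section~2 (a pair joined by $k$ geodesics of length $2$ contributes $\tfrac1k$ to each interior vertex) applies.

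First I would fix the hub $c$ and write the rim as a cycle $v_1,\dots,v_{n-1}$ in cyclic order, then classify a pair of rim vertices $\{v_i,v_j\}$ by their distance $\ell$ along the rim cycle $C_{n-1}$. If $\ell=1$ the pair is adjacent and contributes nothing to anyone. If $\ell=2$ there are exactly two geodesics in $W_n$: the rim path through the common rim-neighbour of $v_i$ and $v_j$, and the path through $c$; here the hypothesis $n-1\ge 5$ is essential, since it makes the other way around the rim have length $n-1-2\ge 3>2$, so no third geodesic appears — this is precisely what fails for $W_5$ and explains the restriction $n>5$. If $\ell\ge 3$ the unique geodesic is $v_i\,c\,v_j$. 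Counting gives $n-1$ pairs with $\ell=1$, $n-1$ pairs with $\ell=2$, and $\binom{n-1}{2}-2(n-1)$ pairs with $\ell\ge 3$ (this last quantity being nonnegative for all $n\ge 6$).

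For the hub, summing the contributions $\sigma_{st}(c)/\sigma_{st}$ over these pairs yields $\tfrac12$ for each of the $n-1$ pairs at rim-distance $2$ (as $c$ lies on one of the two geodesics) and $1$ for each of the pairs at rim-distance $\ge 3$; the algebra then collapses: $\tfrac{n-1}{2}+\binom{n-1}{2}-2(n-1)=\tfrac{(n-1)(n-5)}{2}$. For a rim vertex $v=v_i$, I would note that $v_i$ is interior to a rim geodesic only for the single pair $\{v_{i-1},v_{i+1}\}$ of its two rim-neighbours, that this pair is at rim-distance $2$ and hence has two geodesics, and that $v_i$ never lies on a geodesic of the form $v_j\,c\,v_k$; therefore $C_B(v_i)=\tfrac12$, independently of $i$.

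The argument is essentially bookkeeping, and the one genuine subtlety — the step I would be most careful about — is the exact count of geodesics between rim vertices at rim-distance $2$: one must rule out a second route around the rim (which forces $n-1\ge 5$) while remembering to include the route through $c$. I would close with a short remark recording that the bound $n>5$ is sharp: in $W_5$ the distance-$2$ rim pairs acquire a third geodesic, so both the hub value and the rim value change.
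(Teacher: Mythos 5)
Your proposal is correct and follows essentially the same route as the paper: classify pairs of rim vertices by their distance along the rim cycle $C_{n-1}$ (adjacent pairs contribute $0$, alternate pairs $\tfrac12$, all others $1$ to the hub), count $n-1$, $n-1$, and $\binom{n-1}{2}-2(n-1)$ such pairs, and observe that a rim vertex lies only on one of the two geodesics joining its two rim-neighbours. Your additional care in justifying that alternate rim pairs have exactly two geodesics when $n>5$ (and why this fails in $W_5$) is a welcome sharpening of the paper's terser argument, but not a different method.
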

\vspace{0cm}
\begin{proof}

In the wheel graph $W_n$ the central vertex is adjacent to each vertex of the cycle $C_{n-1}$. 
 When $n>5$ consider the central vertex. On $C_{n-1}$ each pair of adjacent vertices  contributes $0$, each pair of alternate vertices contributes $\frac{1}{2}$ and all other pairs contribute  centrality $1$ to the central vertex.  Since there are $n-1$ vertices on $C_{n-1}$, there exists $n-1$ adjacent pairs, $n-1$ alternate pairs and $\binom{n-1}{2}-2(n-1)=\frac{(n-1)(n-6)}{2}$ other pairs. Therefore the central vertex has betweenness centrality $\frac{1}{2}(n-1)+ 1\frac{(n-1)(n-6)}{2}=\frac{(n-1)(n-5)}{2}$. Now for any vertex on $C_{n-1}$, there are two geodesics joining its adjacent vertices on $C_{n-1}$, one of which passing through it. Therefore its betweenness centrality is $\frac{1}{2}$.
\end{proof}
\noindent \textit{Note}\\
It can be seen easily that $C_B(v)=0$ for every vertex $v$ in $W_4$   and  
\[
 C_{B}{(v)} =  \begin{dcases*}
       \frac{2}{3}& if $v$ is the central vertex\\
       \hspace{0cm} \frac{1}{3}&  otherwise 
        \end{dcases*}
\]
in $W_5$.\\\\
The relative centrality and graph centrality are as follows
\[
 C_{B}{'}{(v)} = \frac{2C_B(v)}{(n-1)(n-2)}= \begin{dcases*}
       \frac{(n-5)}{n-2}& if $v$ is the central vertex\\
       \frac{1}{(n-1)(n-2)}& otherwise
        \end{dcases*}
\]

$$C_B(W_n)=\frac{\sum_{i=1}^n[C_B {'} (v^*)-C_B{'} (v_i)]}{{(n-1)}}=\frac{n^2-6n+4}{(n-1)(n-2)}$$
\subsection{Betweenness centrality of vertices in the graph $\mathbf{K_n-e}$}

\begin{Theorem}
Let $K_n$ be a complete graph on $n$ vertices and $e=(v_i,v_j)$ be an edge of it. Then the betweenness centrality of vertices in $K_n-e$ is given by
\[
 C_{B}{(v)} =  \begin{dcases*}
       \frac{1}{n-2}& if $v\neq  v_i,v_j$ \\
       \hspace{0cm}0 &  otherwise 
        \end{dcases*}
\]
\end{Theorem}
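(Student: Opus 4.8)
The plan is to exploit the fact that $K_n-e$ is ``almost complete'': the unique pair of non-adjacent vertices is $\{v_i,v_j\}$, and every other pair of vertices is still joined by an edge. So the first step is to argue that only the pair $\{v_i,v_j\}$ can contribute anything to the betweenness centrality of any vertex. Indeed, for any pair $\{s,t\}\neq\{v_i,v_j\}$ the edge $st$ is present in $K_n-e$, hence the unique geodesic from $s$ to $t$ has length $1$ and no interior vertex; by the remarks in Section~2, such a pair contributes $0$ to every vertex of the graph.

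The second step is to describe the geodesics between $v_i$ and $v_j$. Since $n\geq 3$ there is at least one vertex distinct from $v_i$ and $v_j$, and every such vertex $w$ remains adjacent to both $v_i$ and $v_j$ (only the edge $v_iv_j$ was deleted). Hence $d(v_i,v_j)=2$, and the geodesics joining $v_i$ and $v_j$ are exactly the $n-2$ paths $v_i\,w\,v_j$, one for each $w\in V(K_n)\setminus\{v_i,v_j\}$; in particular $\sigma_{v_iv_j}=n-2$, and each such $w$ lies on precisely one of them.

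The third step is to combine these observations. For a vertex $v\notin\{v_i,v_j\}$, exactly one of the $n-2$ geodesics between $v_i$ and $v_j$ passes through $v$, so $\sigma_{v_iv_j}(v)/\sigma_{v_iv_j}=1/(n-2)$; since no other pair contributes, $C_B(v)=\frac{1}{n-2}$. This is also immediate from the ``$k$ geodesics of length $2$'' remark of Section~2 with $k=n-2$. For $v\in\{v_i,v_j\}$, the vertex $v$ is an endpoint of the only contributing pair and is therefore not an interior vertex of any geodesic, so $C_B(v)=0$.

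The argument is essentially a bookkeeping exercise; the only point requiring a moment's care is verifying that $K_n-e$ has diameter $2$ with $\{v_i,v_j\}$ the sole pair at distance $2$, and that every length-$2$ path through a third vertex is genuinely a shortest path. Both facts follow at once from the removal of a single edge, so I do not anticipate a real obstacle here.
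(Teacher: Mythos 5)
Your argument is correct and follows essentially the same route as the paper: the only contributing pair is $\{v_i,v_j\}$, which is joined by $n-2$ geodesics of length $2$, one through each remaining vertex, giving those vertices $\frac{1}{n-2}$ each and the endpoints $0$. Your explicit verification that every other pair is adjacent (hence contributes nothing) is a small but welcome addition of rigor over the paper's proof, which leaves that point implicit.
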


\begin{proof}
Suppose the edge $(v_i,v_j)$ is removed from $K_n$. Now $v_i$ and $v_j$ can be joined by means of any of the remaining $n-2$ vertices. Thus there are $n-2$ geodesics joining $v_i$ and $v_j$ each containing exactly one vertex as intermediary. This provides a betweenness centrality $\frac{1}{n-2}$ to each of the $n-2$ vertices. Again $v_i$ and $v_j$ do not lie in between  any geodesics and therefore their betweenness centralities zero. 
\end{proof}
The relative centrality and graph centrality are as follows
\[
 C_{B}{'}{(v)} = \frac{2C_B(v)}{(n-1)(n-2)}= \begin{dcases*}
       \frac{2}{(n-1)(n-2)^2}& $v\neq  v_i,v_j$\\
       0& otherwise
        \end{dcases*}
\]

$$C_B(G)=\frac{\sum_{i=1}^n[C_B {'} (v^*)-C_B{'} (v_i)]}{{(n-1)}}=\frac{4}{(n-1)^2(n-2)^2}$$
\subsection{Betweenness centrality of vertices in complete bipartite graphs}
\begin{Theorem}
The betweenness centrality of a vertex in a complete bipartite graph $K_{m,n}$  is given by

\[
 C_{B}{(v)} =  \begin{dcases*}
       \frac{1}{m}\binom {n}{2} & if deg(v)=n\\
        \frac{1}{n}\binom {m}{2} & if deg(v)=m
        \end{dcases*}
\]
\vspace{1cm}
\end{Theorem}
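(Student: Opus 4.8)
The plan is to classify the vertices of $K_{m,n}$ by the part they belong to and then, for a fixed vertex $v$, to split the defining sum for $C_B(v)$ according to where the endpoints $s$ and $t$ lie. Write $A$ for the part with $m$ vertices and $B$ for the part with $n$ vertices, so that a vertex has degree $n$ exactly when it lies in $A$ and degree $m$ exactly when it lies in $B$ (when $m=n$ the two descriptions coincide and the stated formula is symmetric, so there is nothing to reconcile).

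First I would record the distance structure of $K_{m,n}$: two vertices in different parts are adjacent, hence at distance $1$ with a unique geodesic carrying no interior vertex; two distinct vertices in the same part are non-adjacent but share a common neighbour, so they are at distance $2$, and every geodesic between them has the form $s\!-\!w\!-\!t$ with $w$ in the opposite part. Consequently, if $s,t$ both lie in $A$ there are exactly $n$ geodesics joining them (one through each vertex of $B$), and if both lie in $B$ there are exactly $m$ geodesics (one through each vertex of $A$).

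Next, fix $v\in A$ and examine $\sigma_{st}(v)/\sigma_{st}$ for each type of pair $\{s,t\}$ with $s\neq v\neq t$. If $s$ and $t$ lie in different parts the geodesic is the single edge $st$, so $v$ is not interior and the contribution is $0$. If $s,t\in A$, every geodesic between them passes through a vertex of $B$, so again $v$ contributes $0$. The only surviving case is $s,t\in B$: here there are $m$ geodesics, exactly one of which, $s\!-\!v\!-\!t$, passes through $v$, contributing $\tfrac1m$. Summing over the $\binom{n}{2}$ pairs of vertices of $B$ gives $C_B(v)=\tfrac1m\binom n2$. Interchanging the roles of $A$ and $B$ (and of $m$ and $n$) yields $C_B(v)=\tfrac1n\binom m2$ for $v\in B$, completing the proof.

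I do not anticipate a genuine obstacle. The only points needing a little care are the verification that every geodesic between two vertices of the same part has length exactly $2$ and passes solely through the opposite part (so that no longer paths, and no same-part intermediaries, enter the count), and the bookkeeping of the pair-count $\binom n2$, which is precisely where the displayed value originates. Degenerate cases such as $n=1$, where $K_{m,1}$ is a star and $\binom n2=0$, are automatically consistent with the formula.
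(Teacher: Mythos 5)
Your proof is correct and follows essentially the same route as the paper: fix a vertex in one part, observe that only same-part pairs in the opposite part (at distance $2$, with one geodesic through each vertex of the fixed vertex's part) contribute $\tfrac{1}{m}$ each, and sum over the $\binom{n}{2}$ such pairs. Your write-up is merely more explicit about the distance structure and the vanishing cases, which the paper leaves implicit.
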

\begin{proof}
Consider a complete bipartite graph $K_{m,n}$ with a bipartition $\{U,W\}$ where $U=\{u_1,u_2,...,u_m\}$ and $W=\{w_1,w_2,\dots,w_n\}$. The distance between any two vertices in $U$ (or in $W$) is 2. Consider a vertex $u\in U$. Now any pair of vertices in $W$ contributes a betweenness centrality $\frac{1}{m}$ to $u$. Since there are $\binom{n}{2}$ pairs of vertices in $W$. $C_B(u)=\frac{1}{m}\binom {n}{2}$.
In a similar way it can be shown that for any vertex $w$ in $W$,\,  $C_B(w)=\frac{1}{n}\binom {m}{2}$.
\end{proof}
The relative centrality and graph centrality are as follows
\[
 C_{B}{'}{(v)}=\frac{2C_B(v)}{(m+n-1)(m+n-2)}= \begin{dcases*}
       \frac{2}{(m+n-1)(m+n-2)}\times\frac{1}{m}\binom{n}{2}& if deg(v)=n\\
       \frac{2}{(m+n-1)(m+n-2)}\times\frac{1}{n}  		\binom{m}{2}& if deg(v)=m
        \end{dcases*}
\]
\[
 C_{B}{(K_{m,n})}=\frac{\sum_{i=1}^n[C_B {'} (v^*)-C_B{'} (v_i)]}{{(m+n-1)}}= \begin{dcases*}
       \frac{m^3-n^3-(m^2-n^2)}{n(m+n-1)^2(m+n-2)}& if $m > n$\\
       \frac{n^2(n-1)-m^2(m-1)}{m(m+n-1)^2(m+n-2)}& if $n > m$
        \end{dcases*}
\]
\subsection{Betweenness centrality of vertices in cocktail party graphs}
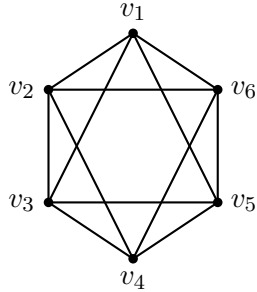
\begin{figure}[H]
\psset{unit=.75}\label{fig7}
\begin{picture}(10000,150)(-250,-60) 
\psdot(0,2)
\uput[u](0,2){$v_1$}
\psdot(-1.5,1)
\uput[l](-1.5,1){$v_2$}
\psdot(-1.5,-1)
\uput[l](-1.5,-1){$v_3$}
\psdot(0,-2)
\uput[d](0,-2){$v_4$}
\psdot(1.5,-1)
\uput[r](1.5,-1){$v_5$}
\psdot(1.5,1)
\uput[r](1.5,1){$v_6$}

\psline(0,2)(-1.5,1)
\psline(-1.5,1)(-1.5,-1)
\psline(-1.5,-1)(0,-2)
\psline(0,-2)(1.5,-1)
\psline(1.5,-1)(1.5,1)
\psline(1.5,1)(0,2)

\psline(0,2)(-1.5,-1)
\psline(0,2)(1.5,-1)
\psline(-1.5,1)(0,-2)
\psline(-1.5,1)(1.5,1)

\psline(-1.5,-1)(1.5,-1)

\psline(0,-2)(1.5,1)

\end{picture}

\caption {Cocktail party graph CP(3)}
\centering
\end{figure}
The cocktail party graph $CP(n)$  \cite{cvetkovic̀1981generalized} is a unique regular graph
of degree $2n-2$ on $2n$ vertices. It is obtained from $K_{2n}$ by deleting a perfect
matching. The cocktail party graph of order $n$, is a complete $n$-partite graph with 2 vertices in each partition set. It is the graph complement of the ladder rung graph $L_n\prime$ which is the graph union of n copies of the path graph $P_2$ and the dual graph of the hypercube $Q_n$ \cite{biggs1993algebraic}.

\begin{Theorem}
The betweenness centrality of each vertex of a cocktail party graph of order $2n$ is $\frac{1}{2}$.
\end{Theorem}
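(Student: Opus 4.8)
The plan is to compute $C_B(v)$ directly from the definition by understanding the distance structure of $CP(n)$, which is the complete $n$-partite graph $K_{2,2,\dots,2}$ (using the order convention where $CP(n)$ has $2n$ vertices). First I would record the two basic facts about distances: any two vertices in different partition classes are adjacent (distance $1$), and the two vertices in the same partition class are at distance $2$, since they are non-adjacent but have a common neighbour (in fact $2n-2$ common neighbours). Consequently the only geodesics of length $2$ are those joining a non-adjacent pair $\{x, \bar x\}$ (the two vertices of one class), and every such geodesic has exactly one interior vertex.

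Next I would fix a vertex $v$ and count, over all ordered or unordered pairs $\{s,t\}$ with $s \neq v \neq t$, the contribution $\sigma_{st}(v)/\sigma_{st}$. Adjacent pairs contribute $0$ (geodesic of length $1$, no interior vertex), so only the $n-1$ non-adjacent pairs $\{x,\bar x\}$ not involving $v$ matter — here I should note that $v$'s own class contributes nothing because that pair includes $v$. For a non-adjacent pair $\{x,\bar x\}$ with $v \notin \{x,\bar x\}$, the number of shortest paths is $\sigma_{x\bar x} = 2n-2$ (one through each common neighbour), and exactly one of them passes through $v$, since $v$ is adjacent to both $x$ and $\bar x$ (they lie in a different class from $v$). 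Hence each such pair contributes $\frac{1}{2n-2}$ to $C_B(v)$.

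Finally I would sum: there are $n-1$ such pairs, so
\[
C_B(v) = (n-1)\cdot \frac{1}{2n-2} = \frac{n-1}{2(n-1)} = \frac{1}{2},
\]
which holds for every vertex by symmetry (the graph is vertex-transitive). I expect the only real subtlety — not so much an obstacle as a point requiring care — is bookkeeping the partition classes correctly: making sure that the pair consisting of $v$'s own ``partner'' is excluded (it contains $v$), and confirming that for every other non-adjacent pair the vertex $v$ is genuinely adjacent to both endpoints so that exactly one of the $2n-2$ geodesics runs through it. One should also dispose of the trivial small cases ($n=1$ gives $K_2$ with no internal pairs, and $n=2$ gives $C_4$) if one wants the statement to be uniformly phrased, though for $n \ge 2$ the computation above is already valid.
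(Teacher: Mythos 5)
Your proposal is correct and follows essentially the same argument as the paper: the only contributing pairs are the $n-1$ non-adjacent pairs (deleted matching edges, i.e.\ partition classes) not containing $v$, each with $2n-2$ geodesics of which exactly one passes through $v$, giving $(n-1)\cdot\frac{1}{2n-2}=\frac{1}{2}$. Your extra care about excluding $v$'s own partner pair and about small cases is fine but does not change the substance.
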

\begin{proof}
Let the cocktail party graph $CP(n)$ be obtained from the complete graph $K_{2n}$ with vertices \\$\{v_1,\dots,v_n,v_{n+1},\dots,v_{2n}\}$ by deleting a  perfect matching $\{(v_1,v_{n+1}),(v_2,v_{n+2}),...,(v_n,v_{2n}\}$. Now for each pair $(v_i,v_{n+i})$ there is a geodesic path of length 2 passing through each of the other $2n-2$ vertices. Thus for any particular vertex, there are $n-1$ pairs of vertices of the above matching not containing that vertex giving a betweenness centrality $\frac{1}{2n-2}$ to that vertex. Therefore the betweenness centrality of any vertex is given by $\frac{n-1}{2n-2}=\frac{1}{2}$.
\end{proof}
The relative centrality and graph centrality are as follows\\ $$C_{B}{'}{(v)}=\frac{2C_B(v)}{(2n-1)(2n-2)}=\frac{1}{(2n-1)(2n-2)}$$,     

$$C_B(G)=0$$
\subsection{Betweenness centrality of vertices in crown graphs}

The crown graph \cite{biggs1993algebraic} is the unique $n-1$ regular graph with $2n$ vertices obtained from a complete bipartite graph $K_{n,n}$ by deleting a perfect
matching.
A crown graph on 2$n$ vertices can be viewed as an undirected graph with two sets of vertices
$u_i$ and $v_i$ and with an edge from $u_i$ to $v_j$ whenever $i\neq j$.
It is the graph complement of the ladder graph $L_{2n}$. The crown graph is a distance-transitive graph.
\begin{Theorem}
The betweenness centrality of each vertex of a crown graph of order $2n$ is $\frac{n+1}{2}$.
\end{Theorem}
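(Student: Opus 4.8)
The plan is to exploit the vertex-transitivity (indeed distance-transitivity) of the crown graph, which means every vertex has the same betweenness centrality, so it suffices to compute $C_B$ at one vertex, say $u_1$, in the description with sides $\{u_1,\dots,u_n\}$, $\{v_1,\dots,v_n\}$ and edges $u_iv_j$ precisely when $i\neq j$. The first step is to record the distances (assuming $n\geq 3$, since $n=2$ gives a disconnected graph): $d(u_i,v_j)=1$ for $i\neq j$; $d(u_i,u_j)=d(v_i,v_j)=2$ for $i\neq j$, realized through a common neighbour $v_k$ (resp.\ $u_k$) with $k\notin\{i,j\}$; and $d(u_i,v_i)=3$, because $u_i$ and $v_i$ have disjoint neighbourhoods (so no path of length $2$) while $u_i-v_k-u_m-v_i$ is a path for suitable $k,m$. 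Thus the diameter is $3$, attained exactly on the antipodal pairs $\{u_i,v_i\}$.

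Next I would go through all unordered pairs $\{s,t\}$ with $s,t\neq u_1$ and determine the fraction of their geodesics that pass through $u_1$. A pair inside $\{u_2,\dots,u_n\}$ has all its geodesics of the shape $u_i-v_k-u_j$, whose interior vertex lies on the $v$-side, so it contributes $0$; a pair $\{v_1,v_j\}$ has geodesics $v_1-u_k-v_j$ with $k\neq 1$, none through $u_1$, contributing $0$; and a pair $\{u_i,v_j\}$ with $i\neq j$ is an edge and contributes $0$. So the only contributing pairs are (i) the pairs $\{v_i,v_j\}$ with $i,j\in\{2,\dots,n\}$ and (ii) the antipodal pairs $\{u_i,v_i\}$ with $i\in\{2,\dots,n\}$. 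For a pair of type (i), the $n-2$ geodesics $v_i-u_k-v_j$ ($k\notin\{i,j\}$) include exactly one through $u_1$, contributing $\tfrac1{n-2}$; there are $\binom{n-1}{2}$ such pairs, for a total of $\binom{n-1}{2}\cdot\tfrac1{n-2}=\tfrac{n-1}{2}$.

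It remains to handle the antipodal pairs. For $\{u_i,v_i\}$ with $i\neq 1$, the geodesics are exactly the paths $u_i-v_k-u_m-v_i$ with $k\neq i$, $m\neq i$, $m\neq k$, of which there are $(n-1)(n-2)$; those through $u_1$ are the ones with $m=1$, namely $u_i-v_k-u_1-v_i$ with $k\notin\{1,i\}$, i.e.\ $n-2$ of them. Hence each such pair contributes $\tfrac{n-2}{(n-1)(n-2)}=\tfrac1{n-1}$, and summing over the $n-1$ of them gives $1$. Adding the two contributions, $C_B(u_1)=\tfrac{n-1}{2}+1=\tfrac{n+1}{2}$, and transitivity extends this to every vertex. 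The delicate point is the last count: one must correctly impose that the interior vertices of a length-$3$ path be distinct and lie on the correct sides, and verify that no shorter connection exists between antipodal vertices; the remaining cases are routine bookkeeping.
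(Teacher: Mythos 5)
Your proof is correct and follows essentially the same route as the paper's: split the contributing pairs into the $n-1$ antipodal pairs $\{u_i,v_i\}$, $i\neq 1$ (each giving $\tfrac{1}{n-1}$, total $1$) and the $\binom{n-1}{2}$ same-side pairs (each giving $\tfrac{1}{n-2}$, total $\tfrac{n-1}{2}$), then invoke vertex-transitivity. Your write-up is in fact tidier than the paper's (which conflates the labels $u_1$ and $v_1$ mid-proof), and your explicit verification of the distances $d(u_i,v_i)=3$ and of the geodesic counts is exactly the ``delicate point'' the paper leaves implicit.
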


\begin{figure}[H]
\psset{unit=.75}\label{fig7}
\begin{picture}(10000,60)(-150,30)
\psdot(2.5,4)
\uput[u](2.5,4){$u_1$}
\psdot(4,4)
\uput[u](4,4){$u_2$}
\psdot(5.5,4)
\uput[u](5.5,4){$u_3$}
\psdot(7,4)
\uput[u](7,4){$u_4$}
\psdot(2.5,2)
\uput[d](2.5,2){$v_1$}
\psdot(4,2)
\uput[d](4,2){$v_2$}
\psdot(5.5,2)
\uput[d](5.5,2){$v_3$}
\psdot(7,2)
\uput[d](7,2){$v_4$}
\psline(2.5,4)(4,2)
\psline(2.5,4)(5.5,2)
\psline(2.5,4)(7,2)
\psline(4,4)(2.5,2)
\psline(4,4)(5.5,2)
\psline(4,4)(7,2)
\psline(5.5,4)(2.5,2)
\psline(5.5,4)(4,2)
\psline(5.5,4)(7,2)
\psline(7,4)(4,2)
\psline(7,4)(5.5,2)
\psline(7,4)(2.5,2)
\end{picture}
\caption{Crown graph with 8-vertices}
\centering
\end{figure}
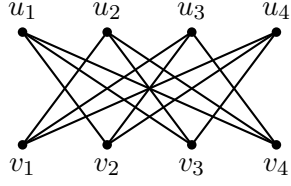
\begin{proof}
Let the crown graph be the complete bipartite graph $K_{n,n}$ with vertices $\{u_1,\dots,u_n,v_1,\dots,v_n\}$ minus a perfect matching $\{(u_1,v_1),(u_2,v_2),...,(u_n,v_n)\}$.
Consider any vertex say $u_1$. Now for each $(u_i,v_i),i\neq1$ each pair other than $(u_1,v_1)$ there are $n-2$ paths of length 3 passing through $u_1$ out of $(n-1)(n-2)$ paths joining $u_i$ and $v_i$. Since there are $n-1$ such pairs, it gives $v_1$ a betweenness centrality  $(n-1)\times\frac{n-2}{(n-1)(n-2)}=1$. Again for each pair from $\{v_2,v_3,v_4,...,v_n\}$ there exists exactly one path passing through $v_1$ out of $n-2$ paths. It gives the betweenness centrality $\frac{n-2}{n-2}+ \frac{n-3}{n-2}+...+\frac{1}{n-2}=\frac{1}{n-2}[(n-2)+(n-3)+...+1]=\frac{n-1}{2}$. Therefore the betweenness centrality of $v_1$ is given by $1+\frac{n-1}{2}=\frac{n+1}{2}$. Since the graph is vertex transitive, the betweenness centrality of any vertex is given by $\frac{n+1}{2}$.

\end{proof}
The relative centrality and graph centrality are as follows\\ $$C_{B}{'}{(v)}=\frac{2C_B(v)}{(2n-1)(2n-2)}=\frac{n+1}{(2n-1)(2n-2)}$$,     
$$C_B(G)=0$$
\subsection{Betweenness centrality of vertices in paths}
\begin{Theorem}
The betweenness centrality of any vertex in a path graph is the product of the number of vertices on either side of that vertex in the path.
\end{Theorem}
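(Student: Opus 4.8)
The plan is to apply the definition of $C_B(v)$ directly, using the single structural fact that a path graph is a tree and therefore admits a \emph{unique} geodesic between any two of its vertices. First I would fix the path on $n$ vertices, written as the sequence $v_1, v_2, \dots, v_n$ with $v_i$ adjacent to $v_{i+1}$, and take an arbitrary vertex $v_k$. The vertices ``on one side'' of $v_k$ are $v_1,\dots,v_{k-1}$, of which there are $k-1$, and those ``on the other side'' are $v_{k+1},\dots,v_n$, of which there are $n-k$; the goal is thus to show $C_B(v_k) = (k-1)(n-k)$.

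Next I would observe that for any pair $s=v_i$, $t=v_j$ with $i<j$, the only shortest path joining them is the contiguous subpath $v_i, v_{i+1}, \dots, v_j$, so $\sigma_{st}=1$ and consequently each summand $\sigma_{st}(v_k)/\sigma_{st}$ is either $0$ or $1$. Hence $C_B(v_k)$ simply counts the unordered pairs $\{v_i,v_j\}$ whose (unique) geodesic contains $v_k$. This reduces the whole problem to a counting argument, exactly in the spirit of the first remark in Section~2 that a pair with a unique geodesic contributes $1$ to each interior vertex and $0$ to the others.

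The key step is the characterization: $v_k$ lies on the geodesic between $v_i$ and $v_j$ (with $i<j$) if and only if $i < k < j$, i.e.\ $i \in \{1,\dots,k-1\}$ and $j \in \{k+1,\dots,n\}$. One direction is immediate from the description of the subpath; for the other, any $i \le k$ or $j \ge k$ puts $v_k$ at or beyond an endpoint rather than strictly inside. Counting the admissible pairs gives $(k-1)(n-k)$ choices, which is precisely the product of the number of vertices lying on the two sides of $v_k$ in the path.

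Finally I would note that the formula also covers the two endpoints $v_1$ and $v_n$: there one of the factors $k-1$, $n-k$ vanishes, so $C_B = 0$, consistent with the earlier observation that a pendant vertex lies on no geodesic. I do not expect a genuine obstacle here — the argument is elementary once uniqueness of geodesics is invoked; the only point needing a little care is stating the ``strictly between'' characterization cleanly and confirming the endpoint cases rather than tacitly assuming $v_k$ is internal.
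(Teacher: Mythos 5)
Your proposal is correct and follows essentially the same route as the paper: fix $v_k$, use uniqueness of geodesics in a path, and count the $(k-1)(n-k)$ pairs with one vertex on each side of $v_k$. You merely spell out the strictly-between characterization and the endpoint cases, which the paper leaves implicit.
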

\begin{proof}
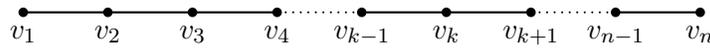
\begin{figure}[H]
\psset{unit=.75}\label{fig7}
\begin{picture}(10000,50)(-50,50)
\psdot(2.5,4)
\uput[d](2.5,4){$v_1$}
\psdot(4,4)
\uput[d](4,4){$v_2$}
\psdot(5.5,4)
\uput[d](5.5,4){$v_3$}
\psdot(7,4)
\uput[d](7,4){$v_4$}
\psdot(8.5,4)
\uput[d](8.5,4){$v_{k-1}$}
\psdot(10,4)
\uput[d](10,4){$v_{k}$}
\psdot(11.5,4)
\uput[d](11.5,4){$v_{k+1}$}
\psdot(13,4)
\uput[d](13,4){$v_{n-1}$}
\psdot(14.5,4)
\uput[d](14.5,4){$v_{n}$}
\psline(2.5,4)(7,4)
\psline[linestyle=dotted,dotsep=2pt](7,4)(8.5,4)
\psline(8.5,4)(11.5,4)
\psline[linestyle=dotted,dotsep=2pt](11.5,4)(13,4)
\psline(13,4)(14.5,4)
\end{picture}
\caption{Path graph $P_n$}
\centering
\end{figure}
Consider a path graph $P_n$ of $n$ vertices $\{v_1,v_2,...,v_n\}$. Take a vertex $v_k$ in $P_n$. Then there are $k-1$ vertices on one side and $n-k$ vertices on the other side of $v_k$. Consequently there are $(k-1)\times (n-k)$ number of geodesic paths containing $v_k$. Hence $C_B(v_k)=(k-1)(n-k)$.
\end{proof}
Note that by symmetry, vertices at equal distance away from both the ends have the same centrality and it is maximum at the central vertex and minimum at the end vertex.       
        \[
 Max~ C_{B}{(v_k)} =  \begin{dcases*}
       \frac{n(n-2)}{4} & when n is even\\
       \frac{(n-1)^2}{4} & when n is odd\\
        \end{dcases*}\]
\\

Relative centrality of any vertex $v_k$ is given by $$C_B{'}(v_k)=\frac{2C_B(v_k)}{(n-1)(n-2)}=\frac{2(k-1)n-k)}{(n-1)(n-2)}$$\\
\begin{Corollary}

Graph centrality of $P_n$ is given by\\
\[
 C_{B}{(P_n)} =  \begin{dcases*}
       \frac{n(n+1)}{6(n-1)(n-2)} & if n is odd\\
        \frac{n(n+2)}{6(n-1)^2} & if n is even
        \end{dcases*}\]
        \end{Corollary}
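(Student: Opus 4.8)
The plan is to specialize the definition
$C_B(G)=\dfrac{\sum_{i=1}^{n}\left[C_B{'}(v^*)-C_B{'}(v_i)\right]}{n-1}$
to $G=P_n$, feeding in the data already established: by the path formula $C_B(v_k)=(k-1)(n-k)$, hence $C_B{'}(v_k)=\dfrac{2(k-1)(n-k)}{(n-1)(n-2)}$, and the maximum value is $C_B(v^*)=\dfrac{n(n-2)}{4}$ when $n$ is even and $C_B(v^*)=\dfrac{(n-1)^2}{4}$ when $n$ is odd. Writing $\Delta:=\sum_{i=1}^{n}\left[C_B{'}(v^*)-C_B{'}(v_i)\right]=n\,C_B{'}(v^*)-\sum_{i=1}^{n}C_B{'}(v_i)$, the whole computation collapses to evaluating the single sum $S:=\sum_{k=1}^{n}(k-1)(n-k)$ and then substituting the parity-dependent value of $C_B{'}(v^*)$.

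First I would evaluate $S$. Substituting $j=k-1$ turns it into $\sum_{j=0}^{n-1}j(n-1-j)=(n-1)\sum_{j=0}^{n-1}j-\sum_{j=0}^{n-1}j^2$, and the standard identities $\sum_{j=0}^{n-1}j=\binom{n}{2}$ and $\sum_{j=0}^{n-1}j^2=\dfrac{(n-1)n(2n-1)}{6}$ give, after factoring out $\tfrac{n(n-1)}{6}$, the clean value $S=\dfrac{n(n-1)(n-2)}{6}$. Consequently $\sum_{i=1}^{n}C_B{'}(v_i)=\dfrac{2}{(n-1)(n-2)}\,S=\dfrac{n}{3}$, which is pleasantly independent of all the other parameters.

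Next I would compute $C_B{'}(v^*)=\dfrac{2\,C_B(v^*)}{(n-1)(n-2)}$ in each parity class, noting that the factor $(n-2)$ in the denominator cancels against $\tfrac{n(n-2)}{4}$ in the even case and the factor $(n-1)$ cancels against $\tfrac{(n-1)^2}{4}$ in the odd case. This yields $C_B{'}(v^*)=\dfrac{n}{2(n-1)}$ for $n$ even and $C_B{'}(v^*)=\dfrac{n-1}{2(n-2)}$ for $n$ odd. Substituting into $\Delta=n\,C_B{'}(v^*)-\tfrac{n}{3}$ and clearing denominators gives $\Delta=\dfrac{3n^2-2n(n-1)}{6(n-1)}=\dfrac{n(n+2)}{6(n-1)}$ in the even case and $\Delta=\dfrac{3n(n-1)-2n(n-2)}{6(n-2)}=\dfrac{n(n+1)}{6(n-2)}$ in the odd case; dividing by $n-1$ reproduces exactly the two claimed formulas.

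The argument is entirely elementary, so there is no real obstacle — the one place demanding care is the parity bookkeeping for $C_B{'}(v^*)$, ensuring the cancellations against $(n-1)(n-2)$ are done correctly before subtracting $\tfrac{n}{3}$ over the common denominator. As a sanity check I would verify the small cases ($n=3,4,5$) directly from the path-vertex formula, and observe that both expressions increase toward the limiting value $\tfrac16$ as $n\to\infty$, consistent with the path becoming ``star-like at the center'' in a very weak sense.
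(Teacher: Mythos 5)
Your computation is correct and verifies: $S=\sum_{k=1}^{n}(k-1)(n-k)=\frac{n(n-1)(n-2)}{6}$, hence $\sum_i C_B{'}(v_i)=\frac{n}{3}$, and with $C_B{'}(v^*)=\frac{n}{2(n-1)}$ ($n$ even) or $\frac{n-1}{2(n-2)}$ ($n$ odd) the quotient by $n-1$ gives exactly the two stated expressions (and it checks on $n=3,4$). The paper proves the same corollary by the same basic route --- substituting the vertex formula into the definition of graph centrality --- but organizes the bookkeeping differently: it works with the unnormalized form $\frac{2}{(n-1)^2(n-2)}\sum_i[C_B(v^*)-C_B(v_i)]$, uses the symmetry of the path to write the sum as twice the sum over the vertices on one side of the centre, and then expands $1(n-2)+2(n-3)+\cdots$ into $\sum k$ and $\sum k(k+1)$ separately for even and odd $n$, so the entire calculation is carried out twice. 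Your version is cleaner precisely because you evaluate the parity-independent total $S$ once in closed form, letting parity enter only through the single value $C_B(v^*)$; this shortens the argument and reduces the opportunities for the kind of index-endpoint slips the paper's double computation invites, at no loss of rigour.
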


\begin{proof}
When $n$ is even, \text{by definition}
\begin{align*}
C_B(P_n) &= \frac{2}{(n-1)^2(n-2)}\sum_{i=1}^n[C_B  (v^*)-C_B (v_i)]\\
&= \frac{4}{(n-1)^2(n-2)} \Bigr\{\Big[\frac{n(n-2)}{4}-0\Big]+\Big[\frac{n(n-2)}{4}-1.(n-2)\Big]+\ldots+\Big[\frac{n(n-2)}{4}-\big(\frac{n-4
}{2}\big)\big(n-\frac{n-2}{2}\big)\Big]\Bigr\}\\
&=\frac{4}{(n-1)^2(n-2)}\Bigr\{\Big[\frac{n(n-2)}{4}\times\frac{n-2}{2}\Big]-\Big[1(n-2)+2(n-3)+\dots+\big( \frac{n-4}{2}\big)\big(n-\frac{n-2}{2}\big)\Big]\Bigr\}\\
&=\frac{4}{(n-1)^2(n-2)}\Bigr\{\frac{n(n-2)^2}{8}-n\sum_{k=1}^{\frac{n-4}{2}}k+\sum_{k=1}^{\frac{n-4}{2}}k(k+1)\Bigr\}\\
&=\frac{4}{(n-1)^2(n-2)}\Bigr\{\frac{n(n-2)^2}{8}-\frac{n(n-2)(n-4)}{8}+\frac{(n-2)(n-4)}{8}+\frac{(n-2)(n-3)(n-4)}{24}\Bigr\}\\
&=\frac{n(n+2)}{6(n-1)^2}
\end{align*}\\

When $n$ is odd, \text{by definition}
\begin{align*}
C_B(P_n) &= \frac{2}{(n-1)^2(n-2)}\sum_{i=1}^n[C_B  (v^*)-C_B (v_i)]\\
&= \frac{4}{(n-1)^2(n-2)} \Bigr\{\Big[\frac{(n-1)^2}{4}-0\Big]+\Big[\frac{(n-1)^2}{4}-1.(n-2)\Big]+\ldots+\Big[\frac{(n-1)^2}{4}-\big(\frac{n-3}{2}\big)\big(n-\frac{n-1}{2}\big)\Big]\Bigr\}\\
&=\frac{4}{(n-1)^2(n-2)}\Bigr\{\Big[\frac{(n-1)^2}{4}\times\frac{n-1}{2}\Big]-\Big[\Big(1(n-2)+2(n-3)+\dots+\big( \frac{n-3}{2}\big)\big(n-\frac{n-1}{2}\big)\Big]\Bigr\}\\
&=\frac{4}{(n-1)^2(n-2)}\Bigr\{\frac{(n-1)^3}{8}-n\sum_{k=1}^{\frac{n-3}{2}}k +\sum_{k=1}^{\frac{n-3}{2}}k(k+1)\Bigr\}\\
&= \frac{4}{(n-1)^2(n-2)} \Bigr\{\frac{(n-1)^3}{8}-\frac{n(n-1)(n-3)}{8}+ \frac{(n-1)(n+1)(n-3)}{24}\Bigr\}\\
&=\frac{n(n+1)}{6(n-1)(n-2)}
\end{align*}
\end{proof}
\subsection{Betweenness centrality of vertices in ladder graphs}

The ladder graph $L_n$ \cite{hosoya1993matching,noy2004recursively} is a planar undirected graph with $2n$ vertices and $n+2(n-1)$ edges.
It can be defined as the cartesian product $P_2\times P_n$.\\ 
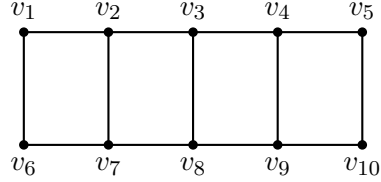
\begin{figure}[h]
\psset{unit=.75}\label{fig7}
\begin{picture}(10000,100)(-110,30)
\psdot(2.5,4)
\uput[u](2.5,4){$v_1$}
\psdot(4,4)
\uput[u](4,4){$v_2$}
\psdot(5.5,4)
\uput[u](5.5,4){$v_3$}
\psdot(7,4)
\uput[u](7,4){$v_{4}$}
\psdot(8.5,4)
\uput[u](8.5,4){$v_{5}$}

\psline(2.5,4)(8.5,4)

\psdot(2.5,2)
\uput[d](2.5,2){$v_{6}$}
\psdot(4,2)
\uput[d](4,2){$v_{7}$}
\psdot(5.5,2)
\uput[d](5.5,2){$v_{8}$}
\psdot(7,2)
\uput[d](7,2){$v_{9}$}
\psdot(8.5,2)
\uput[d](8.5,2){$v_{10}$}
\psline(2.5,2)(8.5,2)

\psline(2.5,4)(2.5,2)
\psline(4,4)(4,2)
\psline(5.5,4)(5.5,2)
\psline(7,4)(7,2)
\psline(8.5,4)(8.5,2)
\end{picture}

\caption{Ladder graph $L_5$}
\centering
\end{figure}
 \begin{Theorem}
 The betweenness centrality of a vertex in a Ladder graph $L_n$ is given by
 $$C_B(v_k)= (k-1)(n-k)+\sum_{j=0}^{k-1}\sum_{i=1}^{n-k} \frac{k-j}{k-j+i}+\sum_{j=0}^{k-2}\sum_{i=0}^{n-k} \frac{i+1}{k-j+i},\;1\leq k\leq n$$
 \end{Theorem}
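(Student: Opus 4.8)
The plan is to coordinatize $L_n=P_2\times P_n$ by writing each vertex as $(i,j)$, where $i\in\{1,2\}$ records which copy of $P_n$ it lies in and $j\in\{1,\dots,n\}$ records its position along that copy; then $v_k$ may be taken to be $(1,k)$, and the involution swapping $i=1$ with $i=2$ is a graph automorphism. Since $L_n$ is a Cartesian product of two paths, $d\big((i,j),(i',j')\big)=|j-j'|+|i-i'|$. From this I will record the two shortest-path counts that the whole argument rests on: (i) between two vertices with the same first coordinate the geodesic is unique, because every step of a geodesic must move one unit in $j$ toward the target; and (ii) between $(1,a)$ and $(2,b)$ there are exactly $|a-b|+1$ geodesics, one for each value in $\{\min(a,b),\dots,\max(a,b)\}$ at which the single change of first coordinate is made.

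Next I will split the unordered pairs $\{s,t\}$ (both distinct from $v_k$) according to the first coordinates of $s$ and $t$. If both equal $2$, the pair contributes $0$, since no geodesic between two such vertices ever has first coordinate $1$. If both equal $1$, say $s=(1,a)$ and $t=(1,b)$, the unique geodesic passes through $(1,k)$ exactly when $a<k<b$ (up to swapping $a$ and $b$); letting $a$ range over $\{1,\dots,k-1\}$ and $b$ over $\{k+1,\dots,n\}$ gives the contribution $(k-1)(n-k)$, the first term of the claimed formula. If the first coordinates are $1$ and $2$, say $s=(1,a)$ with $a\neq k$ and $t=(2,b)$, then by the distance formula $(1,k)$ lies on a geodesic precisely when $k$ is between $a$ and $b$ inclusive, and the geodesics through $(1,k)$ are exactly those that change first coordinate at a value that is $k$ or further from $a$; counting these gives $\sigma_{st}(v_k)=|k-b|+1$, while $\sigma_{st}=|a-b|+1$, so the pair contributes $\tfrac{|k-b|+1}{|a-b|+1}$.

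Finally I will separate the mixed pairs into the case $a<k$ (which forces $b\ge k$) and the case $a>k$ (which forces $b\le k$), so that their total contribution is
\[
\sum_{a=1}^{k-1}\sum_{b=k}^{n}\frac{b-k+1}{b-a+1}\;+\;\sum_{a=k+1}^{n}\sum_{b=1}^{k}\frac{k-b+1}{a-b+1}.
\]
Substituting $j=a-1,\ i=b-k$ in the first double sum and $j=b-1,\ i=a-k$ in the second turns them into $\sum_{j=0}^{k-2}\sum_{i=0}^{n-k}\frac{i+1}{k-j+i}$ and $\sum_{j=0}^{k-1}\sum_{i=1}^{n-k}\frac{k-j}{k-j+i}$, and adding the top-copy term $(k-1)(n-k)$ yields exactly the asserted expression. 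I expect the only real friction to be the boundary bookkeeping — checking that $b=k$ (the vertex directly across from $v_k$) is counted, that $a=k$ is excluded, and that after reindexing the summation ranges come out precisely as stated — so I would confirm the corner case $k=1$ and a generic interior $k$ by hand before regarding the computation as complete; everything else is routine arithmetic.
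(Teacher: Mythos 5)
Your proposal is correct and follows essentially the same route as the paper: you split the pairs into same-row pairs on the top path (giving $(k-1)(n-k)$) and the two families of mixed pairs (bottom-left to top-right, and top-left to bottom-right, each pair contributing $\frac{|k-b|+1}{|a-b|+1}$), which after reindexing are exactly the paper's two double sums. Your version merely makes explicit what the paper leaves implicit — the geodesic count $|a-b|+1$ from the Cartesian product structure, the vanishing contribution of bottom-row pairs, and the change of summation indices — so it is the same decomposition carried out with a bit more care.
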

 \begin{proof}
 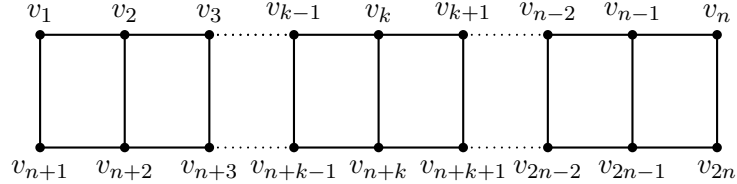
\begin{figure}[H]
\psset{unit=.75}\label{fig7}
\begin{picture}(10000,100)(-70,30)
\psdot(2.5,4)
\uput[u](2.5,4){$v_1$}
\psdot(4,4)
\uput[u](4,4){$v_2$}
\psdot(5.5,4)
\uput[u](5.5,4){$v_3$}
\psdot(7,4)
\uput[u](7,4){$v_{k-1}$}
\psdot(8.5,4)
\uput[u](8.5,4){$v_{k}$}
\psdot(10,4)
\uput[u](10,4){$v_{k+1}$}
\psdot(11.5,4)
\uput[u](11.5,4){$v_{n-2}$}
\psdot(13,4)
\uput[u](13,4){$v_{n-1}$}
\psdot(14.5,4)
\uput[u](14.5,4){$v_{n}$}
\psline(2.5,4)(5.5,4)
\psline[linestyle=dotted,dotsep=2pt](5.5,4)(7,4)
\psline[linestyle=dotted,dotsep=2pt](5.5,4)(7,4)
\psline(7,4)(10,4)
\psline(11.5,4)(14.5,4)
\psline[linestyle=dotted,dotsep=2pt](10,4)(11.5,4)
\psdot(2.5,2)
\uput[d](2.5,2){$v_{n+1}$}
\psdot(4,2)
\uput[d](4,2){$v_{n+2}$}
\psdot(5.5,2)
\uput[d](5.5,2){$v_{n+3}$}
\psdot(7,2)
\uput[d](7,2){$v_{n+k-1}$}
\psdot(8.5,2)
\uput[d](8.5,2){$v_{n+k}$}
\psline(2.5,2)(4,2)
\psline(4,2)(5.5,2)
\psline[linestyle=dotted,dotsep=2pt](10,2)(11.5,2)
\psline[linestyle=dotted,dotsep=2pt](5.5,2)(7,2)
\psdot(10,2)
\uput[d](10,2){$v_{n+k+1}$}
\psdot(11.5,2)
\uput[d](11.5,2){$v_{2n-2}$}
\psdot(13,2)
\uput[d](13,2){$v_{2n-1}$}
\psdot(14.5,2)
\uput[d](14.5,2){$v_{2n}$}
\psline(7,2)(10,2)
\psline(14.5,4)(14.5,2)
\psline(11.5,2)(14.5,2)
\psline(2.5,4)(2.5,2)
\psline(4,4)(4,2)
\psline(5.5,4)(5.5,2)
\psline(7,4)(7,2)
\psline(8.5,4)(8.5,2)
\psline(10,4)(10,2)
\psline(11.5,4)(11.5,2)
\psline(13,4)(13,2)
\end{picture}
\caption{Ladder graph $L_n$}
\end{figure}
 By symmetry, let $v_k$ be any vertex such that $1\leq k\leq\frac{n+1}{2}$. Consider the paths (in fig 3.5) from upper left vertices $\{v_1, \dots,v_{k-1}\}$ to upper right vertices $\{v_{k+1},\dots, v_n\}$ which gives the betweenness centrality $$(k-1)(n-k)\hspace*{7cm}(1)$$ Now consider the paths from lower left vertices $\{v_{n+1},\dots,v_{n+k}\}$ to the upper right vertices $\{v_{k+1},\dots,v_n\}$ of $v_k$. It gives the betweenness centrality
$$k\left\{\frac{1}{k+1}+\frac{1}{k+2}+\dots+\frac{1}{n}\right\}+(k-1)\left\{\frac{1}{k}+\frac{1}{k+1}+\dots+\frac{1}{n-1}\right\}+\dots+\left\{\frac{1}{2}+\frac{1}{3}+\dots+\frac{1}{n-(k-1)}\right\}$$\\
$$ 
=\sum_{j=0}^{k-1}\sum_{i=1}^{n-k} \frac{k-j}{k-j+i}\hspace*{6cm}(2)$$ Consider the paths from upper left vertices $\{v_{1}, \dots,v_{k-1}\}$ to the lower right vertices $\{v_{n+k}, \dots,v_{2n}\}$ of $v_k$. It gives the betweenness centrality $$\left\{\frac{1}{k}+\frac{2}{k+1}+\dots+\frac{n-(k-1)}{n}\right\}
 +\left\{\frac{1}{k-1}+\frac{2}{k}+\dots+\frac{n-(k-1)}{n-1}\right\}+\dots+ \left\{\frac{1}{2}+\frac{2}{3}+\dots+\frac{n-(k-1)}{n-(k-2)}\right\}$$\\
  $$=\sum_{j=0}^{k-2}\sum_{i=0}^{n-k} \frac{i+1}{k-j+i}\hspace*{6cm}(3)$$
  The above three equations when combined get the result.
 \end{proof}
 \subsection{Betweenness centrality of vertices in trees}
In a tree, there is exactly one path between any two vertices. Therefore the betweenness centrality of a vertex is the number of paths passing through that vertex. A branch at a vertex $v$ of a tree $T$ is a maximal subtree containing $v$ as an end vertex. The number of branches at $v$ is $deg(v)$.  \\

\begin{Theorem}
The betweenness centrality $C_B(v)$ of a vertex $v$ in a tree $T$ is given by \\
$$\mathcal{C}(n_1,n_2,\dots,n_k)=\sum_{i<j}n_in_j$$
where the arguments $n_i$ denotes the number of vertices of the branches at $v$ excluding $v$, taken in any order.
\end{Theorem}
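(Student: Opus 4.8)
The plan is to count, in the tree $T$, the number of paths that pass through the fixed vertex $v$, since in a tree every pair of vertices is joined by a unique path and therefore $C_B(v)$ equals exactly this count. First I would set up the notation: let $\deg(v)=k$, and let $B_1,B_2,\dots,B_k$ be the branches at $v$, i.e.\ the maximal subtrees of $T$ having $v$ as an end vertex; write $n_i$ for the number of vertices of $B_i$ other than $v$, so that $n_1+n_2+\dots+n_k = n-1$ where $n=|V(T)|$. The key structural observation is that deleting $v$ from $T$ disconnects it into exactly $k$ components, namely $B_1-v,\dots,B_k-v$, and the $B_i$ pairwise intersect only in $v$.

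Next I would classify the ordered/unordered pairs $\{s,t\}$ of vertices of $T$ with $s\neq v\neq t$ according to whether the unique $s$--$t$ path in $T$ contains $v$. The central claim is: the path from $s$ to $t$ passes through $v$ if and only if $s$ and $t$ lie in different branches (equivalently, in different components of $T-v$). The ``only if'' direction is because if $s,t$ lie in the same branch $B_i$, then since $B_i$ is a subtree the unique $s$--$t$ path stays inside $B_i$, and $v$ is an end vertex of $B_i$ of degree $1$ in $B_i$, so it cannot be an interior vertex of any path within $B_i$ unless it is an endpoint — but $s,t\neq v$. The ``if'' direction: if $s\in B_i$ and $t\in B_j$ with $i\neq j$, then any $s$--$t$ walk must leave $B_i-v$, and the only vertex through which one can pass between distinct components of $T-v$ is $v$ itself, so $v$ lies on the (unique) path. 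Pairs with $s=v$ or $t=v$ contribute $0$ to $C_B(v)$ by definition of the sum $\sum_{s\neq v\neq t}$, so they are irrelevant.

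Given this dichotomy, $C_B(v)$ is simply the number of unordered pairs $\{s,t\}$ with $s$ and $t$ in different branches, which is $\sum_{i<j} n_i n_j$: choose a branch $B_i$ and a branch $B_j$ with $i<j$, then choose one of the $n_i$ non-$v$ vertices of $B_i$ and one of the $n_j$ non-$v$ vertices of $B_j$. This is exactly $\mathcal{C}(n_1,\dots,n_k)$, and it is manifestly independent of the order in which the branches are listed, which justifies the ``taken in any order'' clause. I would also remark that $\sigma_{st}=1$ and $\sigma_{st}(v)\in\{0,1\}$ throughout, so the general Freeman formula $C_B(v)=\sum \sigma_{st}(v)/\sigma_{st}$ collapses to the plain count used here.

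The only subtle point — really the single step requiring care rather than being an outright obstacle — is the clean verification that $T-v$ has exactly $k=\deg(v)$ components and that these are precisely the $B_i-v$; once that is in hand, the "path through $v$ $\iff$ endpoints in different branches" equivalence and the counting are immediate. If one wanted to avoid even this, an alternative is a short induction on $k$: peel off one branch $B_k$, apply the inductive count $\sum_{1\le i<j\le k-1} n_i n_j$ for paths through $v$ with both endpoints outside $B_k$, and add the $n_k(n_1+\dots+n_{k-1}) = n_k\sum_{i<k}n_i$ new through-$v$ paths with exactly one endpoint in $B_k-v$; the sum telescopes to $\sum_{i<j} n_i n_j$. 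I would present the direct counting argument as the main proof and perhaps mention the inductive reformulation in a sentence.
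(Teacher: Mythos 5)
Your proposal is correct and follows essentially the same route as the paper: since paths in a tree are unique, $C_B(v)$ is the number of pairs of vertices whose (unique) path passes through $v$, and such pairs are exactly those with endpoints in distinct branches at $v$, giving $\sum_{i<j}n_in_j$. Your write-up merely makes explicit the details the paper leaves implicit (the component structure of $T-v$ and the ``if and only if'' verification), plus an optional inductive variant that is not needed.
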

\begin{proof}
Consider a vertex $v$ in a tree $T$. Let there are $k$ branches with number of vertices $n_1,n_2,\dots,n_k$ excluding $v$. The betweenness centrality of $v$ in $T$ is the total number of paths passing through $v$. Since all the branches have only one  vertex $v$ in common, excluding $v$, every path joining a pair of vertices of different branches passes through $v$. Thus the total number of such pairs gives the betweenness centrality of $v$. Hence $\mathcal{C}=\sum_{i<j}n_in_j$
\end{proof}
\begin{Ex}
Consider the tree given below
\end{Ex}
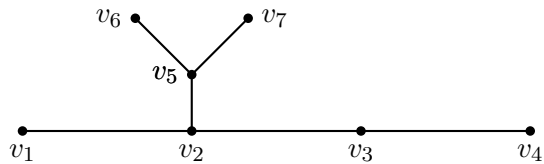
\begin{figure}[H]
\psset{unit=.75}\label{fig7}
\centering
\begin{picture}(10000,50)(-150,20)
\psdot(0,0)
\uput[d](0,0){$v_1$}
\psdot(3,0)
\uput[d](3,0){$v_2$}
\psdot(6,0)
\uput[d](6,0){$v_3$}
\psdot(9,0)
\uput[d](9,0){$v_4$}
\psdot(3,1)
\uput[l](3,1){$v_5$}
\psdot(2,2)
\uput[l](2,2){$v_6$}
\uput[l](3,1){$v_5$}
\psdot(4,2)
\uput[r](4,2){$v_7$}
\psline(0,0)(9,0)
\psline(3,0)(3,1)
\psline(3,1)(2,2)
\psline(3,1)(4,2)
\end{picture}\vspace{1cm}
\caption{A tree with 7 vertices}
\end{figure}
\begin{flushleft}
    Here
\end{flushleft}
$C_B(v_1)=C_B(v_4)=C_B(v_6)=C_B(v_7)=\mathcal{C}(6)=0$\\
$C_B(v_2)=\mathcal{C}(1,3,2)=11$\\
$C_B(v_3)=\mathcal{C}(5,1)=5$\\
$C_B(v_5)=\mathcal{C}(1,1,4)=9$\\

\begin{Ex}
The following table gives the possible values for the betweenness centrality of a vertex in a tree of 9 vertices.
\end{Ex}
We consider the different possible combinations of the arguments in $\mathcal{C}$ so that the sum of arguments is $8$. \\
\begin{table}[H]	
\begin{center}
  \begin{tabular}{| c | l | c | }
    \hline
    No.of args. & Possible combinations & Values \\ \hline
    8 & $\mathcal{C}(1,1,1,1,1,1,1,1)$ & 28 \\ \hline
    7 & $\mathcal{C}(2,1,1,1,1,1,1)$& 27 \\ \hline 
    6 & $\mathcal{C}(2,2,1,1,1,1)$& 26 \\ \cline{2-3}
      & $\mathcal{C}(3,1,1,1,1,1)$ & 25\\ \hline 
    5 & $\mathcal{C}(2,2,2,1,1)$ & 25 \\ \cline{2-3}
      &  $\mathcal{C}(3,2,1,1,1)$ & 24 \\ \cline{2-3}
      & $\mathcal{C}(4,1,1,1,1)$ & 22 \\ \hline    4 & $\mathcal{C}(2,2,2,2)$  & 24 \\ \cline{2-3}
      &  $\mathcal{C}(3,2,2,1)$ & 23 \\ \cline{2-3}
      & $\mathcal{C}(3,3,1,1)$ & 22 \\ \cline{2-3}
      & $\mathcal{C}(5,1,1,1)$ & 18 \\ \cline{2-3}
      &  $\mathcal{C}(4,2,1,1)$ & 21 \\ \hline 
    3 & $\mathcal{C}(3,3,2)$ & 21 \\ \cline{2-3}
      &	$\mathcal{C}(4,2,2)$ & 20 \\ \cline{2-3}
      & $\mathcal{C}(4,3,1)$ & 19 \\ \cline{2-3}
      & $\mathcal{C}(5,2,1)$ & 17 \\ \cline{2-3}
      & $\mathcal{C}(6,1,1)$ & 13 \\ \hline 
    2 & $\mathcal{C}(4,4)$ & 16 \\ \cline{2-3}
      &  $\mathcal{C}(5,3)$ & 15\\ \cline{2-3}
      &$\mathcal{C}(6,2)$ & 12\\ \cline{2-3}
      &$\mathcal{C}(7,1)$ & 7\\ \hline 
    1 &$\mathcal{C}(8)$ & 0\\ \hline
   
  \end{tabular}
 \caption{Possible values for betweenness centrality in a tree of 9 vertices}
\end{center}
\end{table}
\subsection{Betweenness centrality of vertices in cycles }
\begin{Theorem}
The betweenness centrality of a vertex in a cycle $C_n$ is given by 

\[
 C_{B}{(v)} =  \begin{dcases*}
       \frac{(n-2)^2}{8}& if $n$ is even\\
        \frac{(n-1)(n-3)}{8}& if $n$ is odd
        \end{dcases*}
\]

\end{Theorem}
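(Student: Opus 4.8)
The plan is to exploit the vertex-transitivity of $C_n$ and count, for a fixed vertex $v$, the contribution of every pair $\{s,t\}$ of other vertices to $C_B(v)$. The key structural fact is that a cycle has either one or two geodesics between any pair of vertices: if the cyclic distance $d(s,t)$ is strictly less than $n/2$ the geodesic is unique, while if $n$ is even and $d(s,t)=n/2$ there are exactly two geodesics (the two arcs), so such a pair contributes $\tfrac12$ to each interior vertex on either arc. First I would set up coordinates, placing $v=v_0$ and the remaining vertices $v_1,\dots,v_{n-1}$ around the cycle, and observe that $v$ lies on the (unique, when applicable) shortest path between $v_i$ and $v_j$ precisely when $v$ is strictly interior to the shorter arc joining them — equivalently, when going the short way from $v_i$ to $v_j$ passes through $v_0$.

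Next I would split into the two parity cases. For $n$ odd, every pair of vertices has a unique geodesic, so $C_B(v)$ is just the number of pairs $\{s,t\}$ whose short arc has $v$ strictly in its interior. A clean way to count these is to say: choose how many vertices lie strictly between $s$ and $v$ on one side, say $a\ge 1$, and how many lie strictly between $v$ and $t$ on the other side, say $b\ge 1$; the pair $\{s,t\}$ contributes to $v$ iff the arc through $v$ of length $a+b+1$ (edges) is the strictly shorter one, i.e. $a+b+1 < n/2$, i.e. $a+b \le \tfrac{n-3}{2}$ when $n$ is odd. So $C_B(v) = \#\{(a,b): a\ge1,\ b\ge1,\ a+b\le \tfrac{n-3}{2}\}$, which is a triangular number: summing $\sum_{a=1}^{(n-3)/2-1}\big(\tfrac{n-3}{2}-a\big)$ gives $\binom{(n-3)/2}{2}$, and one checks this equals $\tfrac{(n-1)(n-3)}{8}$.

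For $n$ even, I would first count in the same way the pairs whose short arc through $v$ has length $a+b+1 \le \tfrac{n}{2}-1$, i.e. $a+b\le \tfrac{n-4}{2}$, each contributing $1$; that count is $\binom{(n-2)/2}{2}$. Then I would separately handle the pairs $\{s,t\}$ at distance exactly $n/2$: for such a pair there are two geodesics, and $v$ lies interior to exactly one of them, contributing $\tfrac12$. The antipodal pairs with $v$ on one of their geodesics are exactly those where $s$ and $t$ are the two endpoints of a diameter passing "around" $v$; counting these gives an additional $\tfrac12\cdot(\tfrac{n}{2}-1)$ (there are $\tfrac{n}{2}-1$ antipodal pairs for which $v$ is strictly interior to one of the two arcs). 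Adding $\binom{(n-2)/2}{2} + \tfrac12\big(\tfrac{n}{2}-1\big)$ and simplifying should yield $\tfrac{(n-2)^2}{8}$.

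The main obstacle I expect is bookkeeping in the even case: getting the range of $a+b$ exactly right so as not to double-count the antipodal pairs (which must be pulled out and weighted by $\tfrac12$ rather than $1$), and correctly counting how many antipodal pairs actually have $v$ in the interior of a geodesic. Everything else is a routine evaluation of triangular sums, and the odd case is a sanity check on the method before tackling the even case. An alternative, slightly slicker route for the odd case is to use vertex-transitivity together with the identity $\sum_v C_B(v) = \sum_{\{s,t\}} (d(s,t)-1)$ (each pair at distance $d$ has a geodesic with $d-1$ interior vertices), so $n\cdot C_B(v) = \sum_{\{s,t\}}(d(s,t)-1)$; evaluating $\sum_{\{s,t\}} d(s,t)$ via the known Wiener index of the cycle then gives $C_B(v)$ directly, and the same idea works for $n$ even after accounting for the factor-$\tfrac12$ on antipodal pairs. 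I would present whichever of the two is cleaner to write out.
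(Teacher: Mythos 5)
Your overall strategy is the same as the paper's (fix $v$, split by parity, give weight $1$ to pairs with a unique geodesic through $v$ and weight $\tfrac12$ to antipodal pairs in the even case, then evaluate triangular sums using vertex-transitivity), but the bookkeeping you yourself flagged as the main risk is in fact broken in the odd case as written. Your parameterization is internally inconsistent: if $a,b$ count vertices \emph{strictly between} $s,v$ and $v,t$, then $a,b\ge 0$ (not $\ge 1$) and the arc through $v$ has $a+b+2$ edges (not $a+b+1$). With the consistent convention the odd-case condition is $a+b+2\le\frac{n-1}{2}$ and the count is $\binom{(n-1)/2}{2}=\frac{(n-1)(n-3)}{8}$; your stated count $\binom{(n-3)/2}{2}$ does \emph{not} equal $\frac{(n-1)(n-3)}{8}$ (for $n=5$ it gives $0$, while the true value is $1$, coming from the single pair of neighbours of $v$ -- a pair your constraint $a,b\ge 1$ excludes). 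In the even case your final numbers $\binom{(n-2)/2}{2}+\tfrac12\left(\tfrac n2-1\right)=\frac{(n-2)^2}{8}$ are correct and agree with the paper's $\frac{(k-1)(k-2)}{2}+\frac{k-1}{2}$, but they do not follow from the constraint you wrote ($a,b\ge1$, $a+b\le\frac{n-4}{2}$ yields $\binom{(n-4)/2}{2}$); again the fix is the same recalibration of $a,b$.

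The alternative you sketch at the end is actually the cleaner and genuinely different route, and it needs less care than you suggest: since every geodesic between $s$ and $t$ has exactly $d(s,t)-1$ interior vertices, $\sum_v \sigma_{st}(v)/\sigma_{st}=d(s,t)-1$ holds whether or not the geodesic is unique, so no separate treatment of antipodal pairs is required. Vertex-transitivity then gives $n\,C_B(v)=\sum_{\{s,t\}}\bigl(d(s,t)-1\bigr)=W(C_n)-\binom n2$, and plugging in $W(C_n)=n^3/8$ (even) or $n(n^2-1)/8$ (odd) yields $\frac{(n-2)^2}{8}$ and $\frac{(n-1)(n-3)}{8}$ directly. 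If you present the direct count, repair the off-by-one as above; if you present the Wiener-index argument, it is complete once you justify the two displayed identities.
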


\begin{proof}
Case$1$: When $n$ is even\\
\begin{figure}[H]
\psset{unit=.75}\label{fig7}
\begin{picture}(10000,100)(-70,30)
\psdot(3,3)
\uput[l](3,3){$v_1$}
\psdot(4,4)
\uput[u](4,4){$v_2$}
\psdot(5.5,4)
\uput[u](5.5,4){$v_3$}
\psdot(7,4)
\uput[u](7,4){$v_4$}
\psdot(8.5,4)
\uput[u](8.5,4){$v_5$}
\psdot(10,4)
\uput[u](10,4){$v_{k-3}$}
\psdot(11.5,4)
\uput[u](11.5,4){$v_{k-2}$}
\psdot(13,4)
\uput[u](13,4){$v_{k-1}$}
\psdot(14.5,4)
\uput[u](14.5,4){$v_{k}$}
\psline(3,3)(4,4)
\psline(4,4)(8.5,4)
\psline(10,4)(14.5,4)
\psline[linestyle=dotted,dotsep=2pt](8.5,4)(10,4)
\psdot(15.5,3)
\uput[r](15.5,3){$v_{k+1}$}
\psline(14.5,4)(15.5,3)
\psdot(4,2)
\uput[d](4,2){$v_{2k}$}
\psdot(5.5,2)
\uput[d](5.5,2){$v_{2k-1}$}
\psdot(7,2)
\uput[d](7,2){$v_{2k-2}$}
\psdot(8.5,2)
\uput[d](8.5,2){$v_{2k-3}$}
\psline(3,3)(4,2)
\psline(4,2)(8.5,2)
\psline[linestyle=dotted,dotsep=2pt](8.5,2)(10,2)
\psdot(10,2)
\uput[d](10,2){$v_{k+5}$}
\psdot(11.5,2)
\uput[d](11.5,2){$v_{k+4}$}
\psdot(13,2)
\uput[d](13,2){$v_{k+3}$}
\psdot(14.5,2)
\uput[d](14.5,2){$v_{k+2}$}
\psline(15.5,3)(14.5,2)
\psline(10,2)(14.5,2)
\end{picture}
\caption{Even Cycle with 2k vertices}
\end{figure}
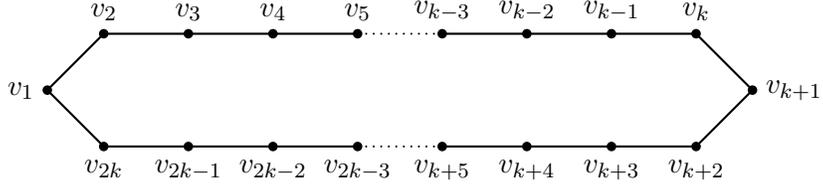
Let $n=2k,k\in \mathbb{Z^+}$ and $C_n=(v_1,v_2,...,v_{2k})$ be the given cycle. Consider a vertex $v_1$. Then $v_{k+1}$ is its antipodal vertex and there is no geodesic path from $v_{k+1}$ to any other vertex passing through $v_1$. Hence we omit the pair $(v_1,v_{k+1})$. Consider other pairs of antipodal vertices  $(v_i,v_{k+i})$ for $i=2,3,\dots, k$. For each pair of these antipodal vertices there exists two paths of the same length $k$ and one of them contains $v_1$. Thus each pair contributes $\frac{1}{2}$ to the centrality of $v_1$ and which gives a total of $\frac{1}{2}(k-1)$. Now consider all paths of length less than $k$ containing $v_1$. There are $k-i$ paths joining $v_i$ to vertices from $v_{2k}$ to $v_{k+i+1}$ passing through $v_1$ for $i=2,3,\dots,k-1$ and each contributes centrality 1 to $v_1$ giving a total $\sum_{i=2}^{k-1}(k-i)=\frac{(k-1)(k-2)}{2}$. Therefore the betweenness centrality of $v_1$ is  $\frac{1}{2}(k-1)+\frac{(k-1)(k-2)}{2}=\frac{1}{2}(k-1)^2=\frac{1}{8}(n-2)^2$. Since $C_n$ is vertex transitive, the betweenness centrality of any vertex is given by $\frac{1}{8}(n-2)^2$. \\ 

Case$2$: When $n$ is odd\\
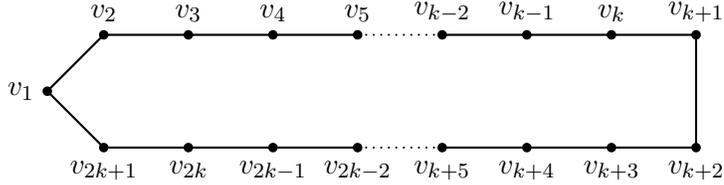
\begin{figure}[H]
\psset{unit=.75}\label{fig7}
\begin{picture}(10000,100)(-70,30)
\psdot(3,3)
\uput[l](3,3){$v_1$}
\psdot(4,4)
\uput[u](4,4){$v_2$}
\psdot(5.5,4)
\uput[u](5.5,4){$v_3$}
\psdot(7,4)
\uput[u](7,4){$v_4$}
\psdot(8.5,4)
\uput[u](8.5,4){$v_5$}
\psdot(10,4)
\uput[u](10,4){$v_{k-2}$}
\psdot(11.5,4)
\uput[u](11.5,4){$v_{k-1}$}
\psdot(13,4)
\uput[u](13,4){$v_{k}$}
\psdot(14.5,4)
\uput[u](14.5,4){$v_{k+1}$}
\psline(3,3)(4,4)
\psline(4,4)(8.5,4)
\psline(10,4)(14.5,4)
\psline[linestyle=dotted,dotsep=2pt](8.5,4)(10,4)
\psdot(4,2)
\uput[d](4,2){$v_{2k+1}$}
\psdot(5.5,2)
\uput[d](5.5,2){$v_{2k}$}
\psdot(7,2)
\uput[d](7,2){$v_{2k-1}$}
\psdot(8.5,2)
\uput[d](8.5,2){$v_{2k-2}$}
\psline(3,3)(4,2)
\psline(4,2)(8.5,2)
\psline[linestyle=dotted,dotsep=2pt](8.5,2)(10,2)
\psdot(10,2)
\uput[d](10,2){$v_{k+5}$}
\psdot(11.5,2)
\uput[d](11.5,2){$v_{k+4}$}
\psdot(13,2)
\uput[d](13,2){$v_{k+3}$}
\psdot(14.5,2)
\uput[d](14.5,2){$v_{k+2}$}
\psline(14.5,4)(14.5,2)
\psline(10,2)(14.5,2)
\end{picture}
\caption{Odd Cycle with 2k+1 vertices}
\end{figure}
Let $n=2k+1,k\in \mathbb{Z^+}$ and $C_n=(v_1,v_2,...,v_{2k+1})$ be the given cycle. Consider a vertex $v_1$. Then $v_{k+1}$ and $v_{k+2}$ are its antipodal vertices at a distance $k$ from  $v_1$  and there is no geodesic path from the vertex $v_{k+1}$ and $v_{k+2}$ to any other vertex passing through $v_1$. Hence we omit $v_1$ and the pair $(v_{k+1},v_{k+2})$. Now consider paths of length $\leq k$ passing through $v_1$. There are $k+1-i$ paths joining $v_i$ to vertices from $v_{2k+1}$ to $v_{k+1+i}$is a geodesic path through $v_1$ giving a betweenness centrality 1 to $v_1$. Consider paths from $v_i$ to the vertices $v_{2k+1},v_{2k},\dots,v_{k+i+1}$ passing through $v_1$ for each $i=2,3,\dots,k$. Therefore the betweenness centrality of $v_1$ is $\sum_{i=2}^{k}(k+1-i)=\frac{k(k-1)}{2}=\frac{(n-1)(n-3)}{8}$. Since $C_n$ is vertex transitive, the betweenness centrality of any vertex is given by $\frac{(n-1)(n-3)}{8}$.\\  
\end{proof}
The relative centrality and graph centrality are as follows  \\
\[
  C_{B}{'}{(v)} = \frac{C_B(v)}{Max \,C_B(v)}=\frac{2C_B(v)}{(n-1)(n-2)} = \begin{dcases*}
        \frac{n-2}{4(n-1)}  & if $n$ is even\\
        \frac{n-3}{4(n-2)} & if $n$ is odd
        \end{dcases*}
\]

$C_B(C_n)=0$

\subsection{Betweenness centrality of vertices in circular ladder graphs $CL_n$ }
\begin{figure}[H]
\centering
 \includegraphics[scale=0.5]{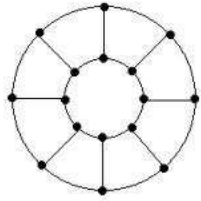}
 \caption{Circular Ladder}
\end{figure}

 \vspace{0cm}
 The circular ladder graph $CL_n$ consists of two concentric $n$-cycles in which each of the $n$ corresponding vertices is joined by an edge. It is a 3-regular simple graph isomorphic to the cartesian product $K_2\times C_n$.
\begin{Theorem}
The betweenness centrality of a vertex in a circular ladder  $CL_n$ is given by
\[
 C_{B}{(v)} =  \begin{dcases*}
       \frac{(n-1)^2+1}{4} & when $n$ is even\\
        \frac{(n-1)^2}{4} & when $n$ is odd
        \end{dcases*}
\]
\end{Theorem}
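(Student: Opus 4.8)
The plan is to use the product structure $CL_n\cong K_2\times C_n$. Both factors are vertex-transitive, hence so is $CL_n$, and it therefore suffices to evaluate $C_B$ at one vertex; write the outer cycle as $u_0,\dots,u_{n-1}$, the inner cycle as $w_0,\dots,w_{n-1}$, with rungs $u_iw_i$, and compute $C_B(u_0)$. First I would record the shape of geodesics in $K_2\times C_n$. A geodesic joining two vertices of the same copy of $C_n$ stays inside that copy, since any detour through the other copy uses at least two rung edges and so adds at least $2$ to the length; a geodesic joining $u_i$ to $w_j$ uses exactly one rung edge $u_kw_k$, so it is a $C_n$-geodesic from $i$ to $k$, followed by that rung, followed by a $C_n$-geodesic from $k$ to $j$, where the crossover index $k$ must lie on a $C_n$-geodesic from $i$ to $j$. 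In particular $\sigma_{u_iw_j}=\sum_k\sigma^{C_n}_{ik}\sigma^{C_n}_{kj}$ summed over admissible crossover indices, and $\sigma_{u_iw_j}(u_0)=\sum_k\sigma^{C_n}_{i0}\sigma^{C_n}_{0k}\sigma^{C_n}_{kj}$ summed over those admissible $k$ for which $0$ lies on a $C_n$-geodesic from $i$ to $k$.

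Next, partition the unordered pairs $\{s,t\}$ with $s,t\neq u_0$ into class (A), both on the outer cycle; class (B), both on the inner cycle; and class (C), one on each. By the geodesic structure above, every geodesic for a class-(A) pair stays in the outer cycle, so the total class-(A) contribution equals the betweenness centrality of a vertex of $C_n$ computed earlier, namely $\tfrac{(n-2)^2}{8}$ for $n$ even and $\tfrac{(n-1)(n-3)}{8}$ for $n$ odd. Every geodesic for a class-(B) pair stays in the inner cycle, which omits $u_0$, so class (B) contributes $0$. The real content is class (C), and the first step there is the criterion: $u_0$ lies on some $u_i$--$w_j$ geodesic if and only if $0$ lies on some $C_n$-geodesic from $i$ to $j$. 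The ``if'' part is witnessed by the crossover $k=0$; for ``only if'', if $u_0$ is on such a path then $d_{C_n}(i,0)+d_{C_n}(0,k)=d_{C_n}(i,k)$ and $d_{C_n}(i,k)+d_{C_n}(k,j)=d_{C_n}(i,j)$, and chaining these with the triangle inequality forces $d_{C_n}(i,0)+d_{C_n}(0,j)=d_{C_n}(i,j)$.

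For each contributing class-(C) pair, parametrise by $p=d_{C_n}(0,i)\geq 1$ and $q=d_{C_n}(0,j)\geq 0$, where $0$ sits on the arc from $i$ to $j$, so $p+q=d_{C_n}(i,j)$. In the generic sub-case $p+q<n/2$ all relevant $C_n$-geodesics are unique; the crossover indices are the $p+q+1$ vertices of that arc, exactly $q+1$ of which keep $u_0$ on the path, so the pair contributes $\tfrac{q+1}{p+q+1}$, and for each admissible $(p,q)$ there are exactly two such pairs (one on each side of $u_0$). Since $\sum_{p\geq1,\,q\geq0,\,p+q=d}\tfrac{q+1}{d+1}=\tfrac d2$, the generic part of class (C) equals $2\sum_{d=1}^{D}\tfrac d2=\tfrac{D(D+1)}{2}$ with $D=\lfloor(n-1)/2\rfloor$. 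When $n$ is even there is in addition the antipodal sub-case $p+q=n/2$: then $i,j$ are antipodal, $\sigma_{u_iw_j}=n+2$, the crossover indices keeping $u_0$ on the path are the $q+1$ vertices of the $0$-to-$j$ sub-arc (with an extra factor $2$ when $p=n/2$), and summing over the $n-1$ antipodal pairs yields $\tfrac n4$. Adding the class-(A) and class-(C) totals then gives $\tfrac{(n-1)(n-3)}{8}+\tfrac{(n-1)(n+1)}{8}=\tfrac{(n-1)^2}{4}$ for $n$ odd, and $\tfrac{(n-2)^2}{8}+\tfrac{n^2}{8}=\tfrac{(n-1)^2+1}{4}$ for $n$ even.

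The step I expect to be the main obstacle is the even-$n$ antipodal sub-case, where $C_n$-geodesics cease to be unique and the multiplicities $\sigma^{C_n}$ (equal to $2$ precisely for antipodal endpoints) must be tracked carefully in both $\sigma_{u_iw_j}$ and $\sigma_{u_iw_j}(u_0)$, with the degenerate pair $(u_{n/2},w_0)$ handled on its own. Everything else reduces to bounded, routine summation once the crossover-index bookkeeping is set up.
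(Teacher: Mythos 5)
Your proposal is correct and follows essentially the same route as the paper: use vertex-transitivity, split the pairs into those on the cycle through the chosen vertex (giving the cycle betweenness $\tfrac{(n-2)^2}{8}$ or $\tfrac{(n-1)(n-3)}{8}$), those on the other cycle (contributing nothing), and the cross pairs, counted by the position of the single rung each geodesic uses, with the even-$n$ antipodal pairs treated separately; your cross-pair totals $\tfrac{n^2}{8}$ (even) and $\tfrac{(n-1)(n+1)}{8}$ (odd) agree with the paper's $\tfrac{k^2}{2}$ and $\tfrac{k(k+1)}{2}$. The only difference is presentational: you state the Cartesian-product geodesic structure and the $\sigma$-bookkeeping as explicit lemmas, whereas the paper writes the same counts directly as nested fractional sums.
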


\begin{proof}
Case$1$: When $n$ is even\\
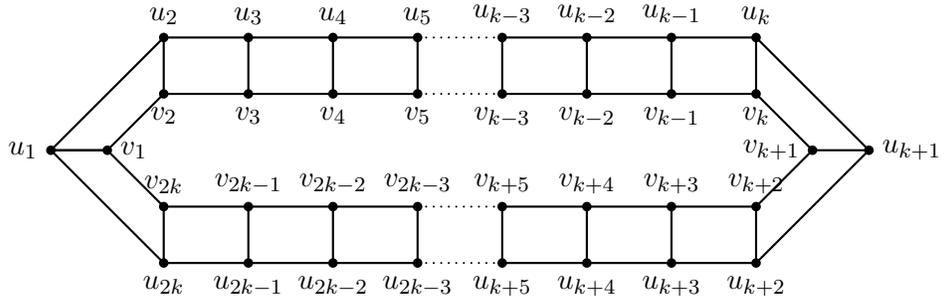
\begin{figure}[H]
\psset{unit=.75}\label{fig7}
\begin{picture}(10000,100)(-70,30)
\psdot(3,3)
\psdot(2,3)
\uput[r](3,3){$v_1$}
\uput[l](2,3){$u_1$}
\psdot(4,4)
\psdot(4,5)
\uput[d](4,4){$v_2$}
\uput[u](4,5){$u_2$}
\psdot(5.5,4)
\psdot(5.5,5)
\uput[d](5.5,4){$v_3$}
\uput[u](5.5,5){$u_3$}
\psdot(7,4)
\psdot(7,5)
\uput[d](7,4){$v_4$}
\uput[u](7,5){$u_4$}
\psdot(8.5,4)
\psdot(8.5,5)
\uput[d](8.5,4){$v_5$}
\uput[u](8.5,5){$u_5$}
\psdot(10,4)
\psdot(10,5)
\uput[d](10,4){$v_{k-3}$}
\uput[u](10,5){$u_{k-3}$}
\psdot(11.5,4)
\psdot(11.5,5)
\uput[d](11.5,4){$v_{k-2}$}
\uput[u](11.5,5){$u_{k-2}$}
\psdot(13,4)
\psdot(13,5)
\uput[d](13,4){$v_{k-1}$}
\uput[u](13,5){$u_{k-1}$}
\psdot(14.5,4)
\psdot(14.5,5)
\uput[d](14.5,4){$v_{k}$}
\uput[u](14.5,5){$u_{k}$}
\psline(3,3)(4,4)
\psline(2,3)(4,5)
\psline(4,4)(8.5,4)
\psline(4,5)(8.5,5)
\psline(10,4)(14.5,4)
\psline(10,5)(14.5,5)
\psline[linestyle=dotted,dotsep=2pt](8.5,4)(10,4)
\psline[linestyle=dotted,dotsep=2pt](8.5,5)(10,5)
\psdot(15.5,3)
\psdot(16.5,3)
\uput[l](15.5,3){$v_{k+1}$}
\uput[r](16.5,3){$u_{k+1}$}
\psline(14.5,4)(15.5,3)
\psline(14.5,5)(16.5,3)
\psdot(4,2)
\psdot(4,1)
\uput[u](4,2){$v_{2k}$}
\uput[d](4,1){$u_{2k}$}
\psdot(5.5,2)
\psdot(5.5,1)
\uput[u](5.5,2){$v_{2k-1}$}
\uput[d](5.5,1){$u_{2k-1}$}
\psdot(7,2)
\psdot(7,1)
\uput[u](7,2){$v_{2k-2}$}
\uput[d](7,1){$u_{2k-2}$}
\psdot(8.5,2)
\psdot(8.5,1)
\uput[u](8.5,2){$v_{2k-3}$}
\uput[d](8.5,1){$u_{2k-3}$}
\psline(3,3)(4,2)
\psline(2,3)(4,1)
\psline(4,2)(8.5,2)
\psline(4,1)(8.5,1)
\psline[linestyle=dotted,dotsep=2pt](8.5,2)(10,2)
\psline[linestyle=dotted,dotsep=2pt](8.5,1)(10,1)
\psdot(10,2)
\psdot(10,1)
\uput[u](10,2){$v_{k+5}$}
\uput[d](10,1){$u_{k+5}$}
\psdot(11.5,2)
\psdot(11.5,1)
\uput[u](11.5,2){$v_{k+4}$}
\uput[d](11.5,1){$u_{k+4}$}
\psdot(13,2)
\psdot(13,1)
\uput[u](13,2){$v_{k+3}$}
\uput[d](13,1){$u_{k+3}$}
\psdot(14.5,2)
\psdot(14.5,1)
\uput[u](14.5,2){$v_{k+2}$}
\uput[d](14.5,1){$u_{k+2}$}
\psline(15.5,3)(14.5,2)
\psline(10,1)(14.5,1)
\psline(10,2)(14.5,2)
\psline(14.5,1)(16.5,3)
\psline(2,3)(3,3)
\psline(4,5)(4,4)
\psline(5.5,5)(5.5,4)
\psline(2,3)(3,3)
\psline(7,5)(7,4)
\psline(8.5,5)(8.5,4)
\psline(10,5)(10,4)
\psline(11.5,5)(11.5,4)
\psline(13,5)(13,4)
\psline(14.5,5)(14.5,4)
\psline(15.5,3)(16.5,3)
\psline(4,2)(4,1)
\psline(5.5,2)(5.5,1)
\psline(7,2)(7,1)
\psline(8.5,2)(8.5,1)
\psline(10,2)(10,1)
\psline(11.5,2)(11.5,1)
\psline(13,2)(13,1)
\psline(14.5,2)(14.5,1)
\end{picture}\vspace{1cm}
\caption{Circular Ladder $CL_{2k}$ }
\end{figure}
Let $n=2k,k\in \mathbb{Z^+}$. $C_{2k}=(u_1,u_2,...,u_{2k})$ be the outer cycle and $C'_{2k}=(v_1,v_2,...,v_{2k})$ be the inner cycle. Consider any vertex say $u_1$ in $C_{2k}$. Then  its betweenness centrality as a vertex in $C_{2k}$ is $\frac{(k-1)^2}{2}$. Now the geodesics from outer vertices  $u_i$ to the inner vertices  $v_1,v_{2k},\dots,v_{k+i}$ for $i=2,\dots,k$ (Fig 3.10) and from  $u_{2k+2-i}$ to $v_1,v_2,\dots,v_{k+2-i}$ for $i=2,\dots,k$ by symmetry, contribute to $u_1$ the betweenness centrality $$2\sum_{i=2}^{k}\Bigl(\frac{1}{i}+\frac{2}{i+1}+\dots+\frac{k+1-i}{k}+\frac{k+2-i}{2k+2}\Bigl)=2\Bigr(\frac{1}{2}+\frac{1+2}{3}+\dots+\frac{1+2+\dots+k-1}{k}+\frac{2+3+\dots+k}{2k+2}\Bigr)$$\\$$=2\sum_{p=2}^{k}\frac{(1+2+\dots+p-1)}{p}+\frac{k(k+1)-2}{2(k+1)}=2\sum_{p=2}^{k }\frac{p-1}{2}+\frac{k(k+1)-2}{2(k+1)}=\frac{k^2}{2}-\frac{1}{k+1}$$\\Again the pair $(u_{k+1},v_1)$ contributes to $u_1$ the betweenness centrality $\frac{2}{2k+2}$. Therefore the betweenness centrality of $u_1$ is given as\\
$$C_B(u_1)=\frac{(k-1)^2}{2}+\frac{k^2}{2}-\frac{1}{k+1}+\frac{1}{k+1}
=\frac{(k-1)^2}{2}+\frac{k^2}{2}=\frac{(2k-1)^2+1}{4}=\frac{(n-1)^2+1}{4}$$

 Case$2$: When $n$ is odd\\
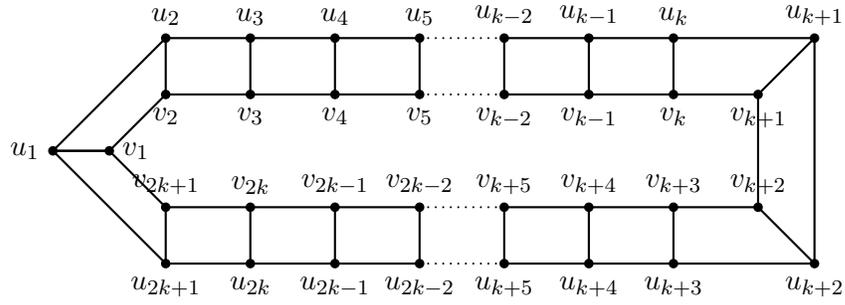
\begin{figure}[H]
\psset{unit=.75}\label{fig7}
\begin{picture}(10000,100)(-70,30)
\psdot(3,3)
\psdot(2,3)
\uput[r](3,3){$v_1$}
\uput[l](2,3){$u_1$}
\psdot(4,4)
\psdot(4,5)
\uput[d](4,4){$v_2$}
\uput[u](4,5){$u_2$}
\psdot(5.5,4)
\psdot(5.5,5)
\uput[d](5.5,4){$v_3$}
\uput[u](5.5,5){$u_3$}
\psdot(7,4)
\psdot(7,5)
\uput[d](7,4){$v_4$}
\uput[u](7,5){$u_4$}
\psdot(8.5,4)
\psdot(8.5,5)
\uput[d](8.5,4){$v_5$}
\uput[u](8.5,5){$u_5$}
\psdot(10,4)
\psdot(10,5)
\uput[d](10,4){$v_{k-2}$}
\uput[u](10,5){$u_{k-2}$}
\psdot(11.5,4)
\psdot(11.5,5)
\uput[d](11.5,4){$v_{k-1}$}
\uput[u](11.5,5){$u_{k-1}$}
\psdot(13,4)
\psdot(13,5)
\uput[d](13,4){$v_{k}$}
\uput[u](13,5){$u_{k}$}
\psdot(14.5,4)
\psdot(15.5,5)
\uput[d](14.5,4){$v_{k+1}$}
\uput[u](15.5,5){$u_{k+1}$}
\psline(3,3)(4,4)
\psline(2,3)(4,5)
\psline(4,4)(8.5,4)
\psline(4,5)(8.5,5)
\psline(10,4)(14.5,4)
\psline(10,5)(15.5,5)
\psline[linestyle=dotted,dotsep=2pt](8.5,4)(10,4)
\psline[linestyle=dotted,dotsep=2pt](8.5,5)(10,5)
\psline(14.5,2)(14.5,4)
\psdot(4,2)
\psdot(4,1)
\uput[u](4,2){$v_{2k+1}$}
\uput[d](4,1){$u_{2k+1}$}
\psdot(5.5,2)
\psdot(5.5,1)
\uput[u](5.5,2){$v_{2k}$}
\uput[d](5.5,1){$u_{2k}$}
\psdot(7,2)
\psdot(7,1)
\uput[u](7,2){$v_{2k-1}$}
\uput[d](7,1){$u_{2k-1}$}
\psdot(8.5,2)
\psdot(8.5,1)
\uput[u](8.5,2){$v_{2k-2}$}
\uput[d](8.5,1){$u_{2k-2}$}
\psline(3,3)(4,2)
\psline(2,3)(4,1)
\psline(4,2)(8.5,2)
\psline(4,1)(8.5,1)
\psline[linestyle=dotted,dotsep=2pt](8.5,2)(10,2)
\psline[linestyle=dotted,dotsep=2pt](8.5,1)(10,1)
\psdot(10,2)
\psdot(10,1)
\uput[u](10,2){$v_{k+5}$}
\uput[d](10,1){$u_{k+5}$}
\psdot(11.5,2)
\psdot(11.5,1)
\uput[u](11.5,2){$v_{k+4}$}
\uput[d](11.5,1){$u_{k+4}$}
\psdot(13,2)
\psdot(13,1)
\uput[u](13,2){$v_{k+3}$}
\uput[d](13,1){$u_{k+3}$}
\psdot(14.5,2)
\psdot(15.5,1)
\uput[u](14.5,2){$v_{k+2}$}
\uput[d](15.5,1){$u_{k+2}$}
\psline(15.5,5)(14.5,4)
\psline(10,1)(15.5,1)
\psline(10,2)(14.5,2)
\psline(2,3)(3,3)
\psline(4,5)(4,4)
\psline(5.5,5)(5.5,4)
\psline(2,3)(3,3)
\psline(7,5)(7,4)
\psline(8.5,5)(8.5,4)
\psline(10,5)(10,4)
\psline(11.5,5)(11.5,4)
\psline(13,5)(13,4)
\psline(15.5,1)(14.5,2)
\psline(4,2)(4,1)
\psline(5.5,2)(5.5,1)
\psline(7,2)(7,1)
\psline(8.5,2)(8.5,1)
\psline(10,2)(10,1)
\psline(11.5,2)(11.5,1)
\psline(13,2)(13,1)
\psline(15.5,1)(15.5,5)
\end{picture}\vspace{.8cm}
\centering
\caption{Circular Ladder $CL_{2k+1}$. }
\end{figure}
Let $n=2k+1,k\in \mathbb{Z^+}$. $C_{2k+1}=(u_1,u_2,...,u_{2k+1})$ be the outer cycle and $C'_{2k+1}=(v_1,v_2,...,v_{2k+1})$ be the inner cycle. Consider any vertex say $u_1$ in $C_{2k+1}$. Then  its betweenness centrality as a vertex in $C_{2k+1}$ is $\frac{k(k-1)}{2}$. Now consider the geodesics  from outer vertices  $u_i$ to the inner vertices  $v_1,v_{2k+1},\dots,v_{k+i+1}$ for $i=2,\dots,k+1$ (Fig 3.11) and from $u_{2k+3-i}$ to  $v_1,v_2,\dots,v_{k+2-i}$ for $i=2,3,\dots,k+1$ which gives a betweenness centrality $$2\Bigl(\frac{1}{i}+\frac{2}{i+1}+\dots+\frac{k+2-i}{k+1}\Bigr)=2\Bigr(\frac{1}{2}+\frac{1+2}{3}+\dots+\frac{1+2+\dots+k}{k+1}\Bigr)$$\\$$=2\sum_{p=2}^{k+1}\frac{(1+2+\dots+p-1)}{p}=2\sum_{p=2}^{k+1}\frac {p-1}{2}=\frac{k(k+1)}{2}$$\\                                                                                               Therefore the betweenness centrality of $u_1$ is given as\\
$$C_B(u_1)=\frac{k(k-1)}{2}+\frac{k(k+1)}{2}=k^2=\frac{(n-1)^2}{4}$$.
\end{proof}
Relative centrality 
\[
 C_{B}{'}{(v)} = \frac{2C_B(v)}{(2n-1)(2n-2)}= \begin{dcases*}
       \frac{(n-1)^2+1}{2(2n-1)(2n-2)}& when n is even\\
       \frac{(n-1)}{4(2n-1)}& when n is odd
        \end{dcases*}
\]
Graph centrality
$$C_B(G)=0$$
\subsection{Betweenness centrality of vertices in hypercubes}
\begin{figure}[h!]
 \centering
  \includegraphics[scale=0.5]{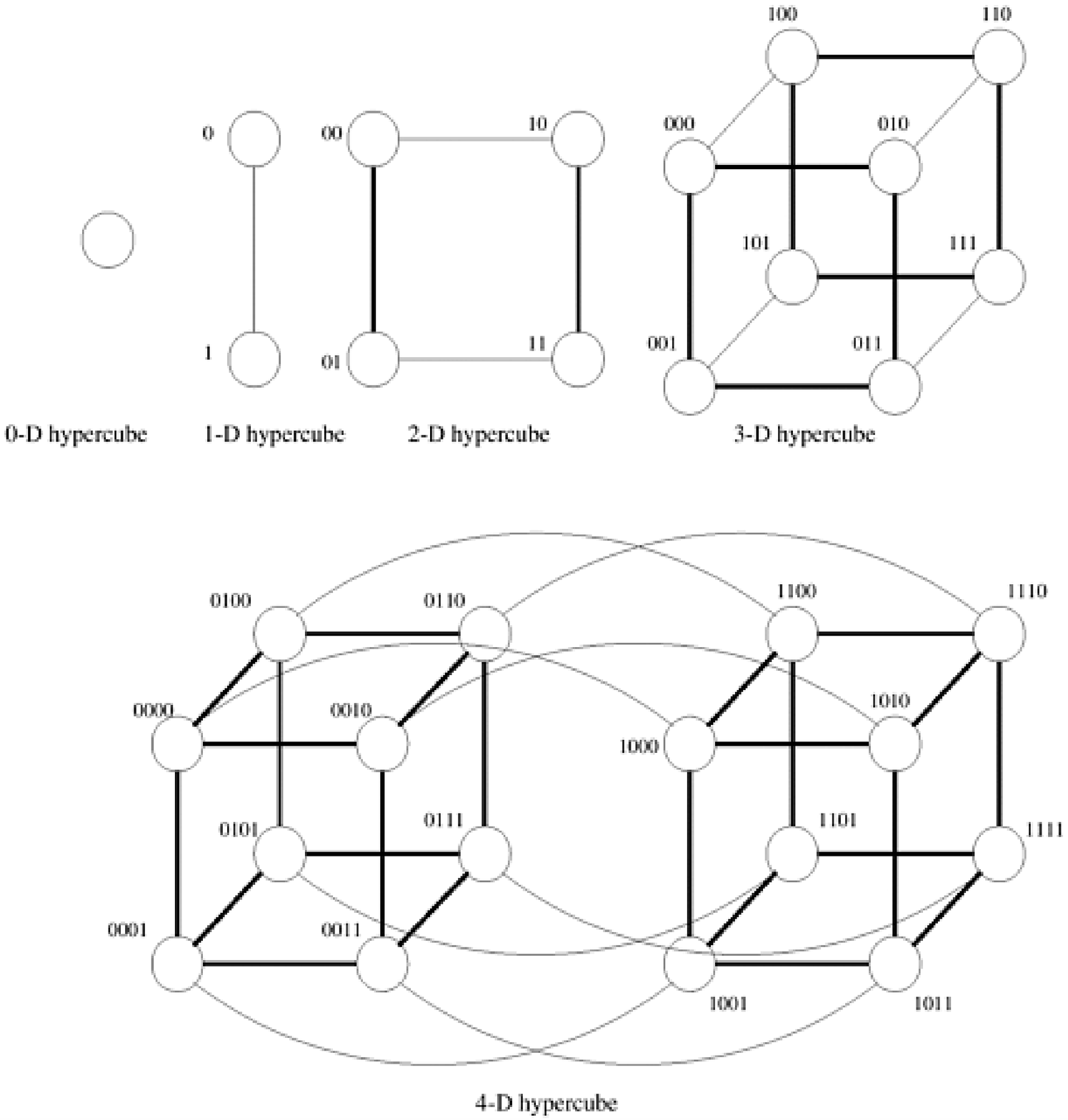}
   \caption{Hypercubes.}
   \label{Fig:HC}
\end{figure}

  The $n$-cube or $n$-dimensional hypercube $Q_n$ is defined recursively by
$Q_1 =K_2$ and
$Q_n= K_2 \times Q_{n-1}$. That is, $Q_n= (K_2)^n$ Harary \cite{harary6graph}.
$Q_n$ is an $n$-regular graph containing $2^n$ vertices and $n2^{n-1}$ edges. Each vertex can be labeled as a string of  $n$  bits $0$ and $1$. Two vertices of  $Q_n$ are adjacent if their binary representations differ at exactly one place (Fig:3.12). The $2^n$ vertices are labeled by the $2^n$ binary numbers from 0 to $2^n-1$. By definition, the length of a path between two vertices $u$ and $v$ is  the number of edges of the path. To reach $v$ from $u$ it suffices to cross successively the vertices whose labels are those obtained by modifying the bits of $u$ one by one in order to transform $u$ into $v$. If $u$ and $v$ differ only in $i$ bits, the distance between $u$ and $v$ denoted by $d(u,v)$, the hamming distance  is $i$ \cite{foldes1977characterization,
saad1988topological}. For example, if $u=(101010)$ and $v=(110011)$ then $d(u,v)=3$.\\ There exists a path of length at most $n$ between any two vertices of $Q_n$. In other words an $n$-cube is a connected graph of diameter $n$. The number of geodesics between $u$ and $v$ is given by the number of permutations  $d(u,v)!$. A hypercube is bipartite and interval-regular. For any two vertices $u$ and $v$, the interval $I(u,v)$ induces a hypercube of dimension $d(u,v)$\cite{mulder1982interval}. 
Another important property of $n$-cube is that it can be constructed recursively from lower dimensional cubes. Consider two identical $(n-1)$-cubes.  Each $(n-1)$-cube has $2^{n-1}$ vertices and each vertex has a labeling of $(n-1)$- bits. Join all identical pairs of the two cubes. Now increase the number of bits in the labels of all vertices by placing 0 in the  $i^{th}$ place of the first cube and 1 in the  $i^{th}$ place of  second cube. Thus we get an $n$-cube with $2^n$ vertices, each vertex having a label of  $n$-bits  and the corresponding vertices of the two $(n-1)$-cubes differ only in the $i^{th}$ bit. This $n$-cube so constructed can  be seen as the union of $n$ pairs of $(n-1)$-cubes differing in exactly one position of bits. Thus the number of $(n-1)$-cubes in an $n$-cube is 2$n$. The operation of splitting an $n$-cube into two disjoint $(n-1)$-cubes so that the vertices of the two $(n-1)$-cubes  are in a one-to-one correspondence  will be referred  to as  $\textit{tearing}$ \cite{saad1988topological}.  Since there are $n$ bits, there are  $n$ directions for $\textit{tearing}$ . In general there are $^nC_k2^{n-k}$ number of $k$-subcubes associated with an $n$-cube. 
\vspace{1cm}
\begin{Theorem}
The betweenness centrality of a vertex in a hypercube $Q_n$ is given by \;$\mathbf{2^{n-2}(n-2)+\frac{1}{2}}$ 
\end{Theorem}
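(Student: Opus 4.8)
The plan is to use vertex-transitivity of $Q_n$ and reduce the entire betweenness sum to a single binomial identity. First I would fix the vertex $v$ to be the all-zero string $0^n$ and identify each vertex with its \emph{support}, the set of coordinates carrying a $1$. The key structural fact, coming from interval-regularity of the hypercube (quoted in the excerpt: $I(u,w)$ induces a subcube of dimension $d(u,w)$), is that $v$ lies on some geodesic between $s$ and $t$ precisely when $v\in I(s,t)$, and in the support picture $I(s,t)=\{x:\operatorname{supp}(s)\cap\operatorname{supp}(t)\subseteq x\subseteq \operatorname{supp}(s)\cup\operatorname{supp}(t)\}$. Hence $v=0^n$ lies in $I(s,t)$ if and only if $\operatorname{supp}(s)\cap\operatorname{supp}(t)=\emptyset$; all other ordered/unordered pairs contribute $0$ to $C_B(v)$.

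Second, for a contributing pair with $|\operatorname{supp}(s)|=a\ge 1$ and $|\operatorname{supp}(t)|=b\ge 1$ we have $d(s,t)=a+b$, so $\sigma_{st}=(a+b)!$ by the count of geodesics in $Q_n$; and since Hamming distance is additive along $I(s,t)$, every geodesic through $v$ decomposes uniquely into a geodesic $s\to v$ (there are $a!$ of them) followed by a geodesic $v\to t$ (there are $b!$), so $\sigma_{st}(v)=a!\,b!$. Therefore each such pair contributes $\dfrac{a!\,b!}{(a+b)!}=\binom{a+b}{a}^{-1}$ to $C_B(v)$.

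Third, I would count, working with ordered pairs $(s,t)$ and halving at the end since the sum defining $C_B$ runs over pairs taken once. The number of ordered pairs with supports of sizes $a$ and $b$ and disjoint is $\binom{n}{a}\binom{n-a}{b}$, and the cancellation $\binom{n}{a}\binom{n-a}{b}\binom{a+b}{a}^{-1}=\dfrac{n!}{(n-a-b)!\,(a+b)!}=\binom{n}{a+b}$ collapses the $(a,b)$-sum; grouping by $c=a+b$ (with $c-1$ choices of $(a,b)$, $a,b\ge 1$) gives
$$2\,C_B(v)=\sum_{c=2}^{n}(c-1)\binom{n}{c}=\sum_{c=0}^{n}(c-1)\binom{n}{c}+1=\bigl(n2^{\,n-1}-2^{\,n}\bigr)+1,$$
so $C_B(v)=2^{\,n-2}(n-2)+\tfrac12$, independent of $v$ by vertex-transitivity. (As a sanity check, $n=2$ gives $\tfrac12$, matching $C_4$, and $n=3$ gives $\tfrac52$ for the $3$-cube.)

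The main points needing care are: pinning down the "pairs counted once" convention so that the surviving additive constant is $\tfrac12$ rather than $1$ — this lone half is exactly the $c=0$ correction term in the binomial sum, i.e. the "contribution $1$" that would come from an empty–empty pair never occurring; and justifying $\sigma_{st}(v)=a!\,b!$, namely that concatenating a shortest $s$–$v$ path with a shortest $v$–$t$ path always produces a shortest $s$–$t$ path and that every geodesic through $v$ arises this way — both immediate from additivity of distance on $I(s,t)$. Everything else is the routine evaluation of $\sum_{c}(c-1)\binom{n}{c}$ via $\sum c\binom{n}{c}=n2^{n-1}$ and $\sum\binom{n}{c}=2^{n}$.
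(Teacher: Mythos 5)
Your proof is correct and is in substance the same as the paper's: both fix $v=0^n$ by vertex-transitivity, observe that only pairs $s,t$ with $v\in I(s,t)$ (equivalently disjoint supports, i.e.\ antipodal pairs of a subcube through $v$) contribute, each such pair contributing $a!\,b!/(a+b)!=1/\binom{a+b}{a}$, and both arrive at the same sum $\tfrac12\sum_{k=2}^{n}(k-1)\binom{n}{k}=2^{n-2}(n-2)+\tfrac12$. The only difference is bookkeeping: you count ordered pairs with disjoint supports and halve, using the cancellation $\binom{n}{a}\binom{n-a}{b}\big/\binom{a+b}{a}=\binom{n}{a+b}$, whereas the paper groups the same pairs by the $\binom{n}{k}$ $k$-subcubes containing $v$ and counts antipodal pairs within each subcube (with an even/odd case split) to get $(k-1)/2$ per subcube.
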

\begin{proof}
The hypercube $Q_n$ of  dimension $n$ is a vertex transitive $n$-regular graph containing $2^n$ vertices. Each vertex can be written as an $n$ tuple of binary digits $0$ and $1$ with adjacent vertices differing in exactly one coordinate. The distance between two vertices $x$ and $y$ denoted by $d(x,y)$ is the number of corresponding coordinates they differ and the number of distinct geodesics between $x$ and $y$ is $d(x,y)!$                                  
 \cite{foldes1977characterization}. Let $\mathbf{0}=(0,0,...,0)$ be the vertex whose betweenness centrality has to be determined. Consider all $k$-subcubes  containing the vertex $\mathbf{0}$ for $2\leq k\leq n$. Each $k$-subcube has vertices with $n-k$ zeros in the $n$ coordinates. Since each $k$-subcube can be distinguished by $k$ coordinates, we consider these $k$ coordinates only. The number of $k$-subcubes containing the vertex $\mathbf{0}$ is $\binom{n}{k}$. The vertex $\mathbf{0}$ lies on a geodesic path joining a pair of vertices if and only if the pair of vertices forms a pair of antipodal vertices of some subcube containing $\mathbf{0}$ 
  \cite{mulder1982interval}. So we consider all  pairs of antipodal vertices excluding the vertex $\mathbf{0}$ and its antipodal vertex in each $k$-subcube containing $\mathbf{0}$.  If a vertex of a $k$-subcube has $r$ ones, then its antipodal vertex has $k-r$ ones.  For any pair of such antipodal vertices there are $k!$ geodesic paths joining them and of that $r!(k-r)!$ paths are  passing through $\mathbf{0}$. Thus each pair contributes $\frac{r!(k-r)!}{k!}$ that is, $ \frac{1}{\binom{k}{r}}$ to the betweenness centrality of $\mathbf{0}$.\\ By symmetry when $k$ is even, the number of distinct pairs of required antipodal vertices are given by $\binom{k}{r}$ for $1\leq r<\frac{k}{2}$ and $\frac{1}{2}\binom{k}{r}$ for $r=\frac{k}{2}$. When $k$ is odd, the number of distinct pairs of required antipodal vertices are given by $\binom{k}{r}$ for $1\leq r\leq \frac{k-1}{2}$. Taking all such pairs of antipodal vertices in a $k$-subcube we get the contribution of betweenness centrality as $\sum_{r=1}^{\frac{k}{2}-1} \binom{k}{r}\frac{1}{\binom{k}{r}}+\frac{1}{2}\binom{k}{\frac{k}{2}}\frac{1}{\binom{k}{\frac{k}{2}}}=\frac{k-1}{2}$, when $k$ is even and $\sum_{r=1}^{\frac{k-1}{2}} \binom{k}{r}\frac{1}{\binom{k}{r}}=\frac{k-1}{2}$ when $k$ is odd. Therefore considering all $k$-subcubes for $2\leq k\leq n$, we get the betweenness centrality of $\mathbf{0}$ as\\
$$C_B(\mathbf{0})=\sum_{k=2}^n \Bigl(\frac{k-1}{2}\Bigr)\binom{n}{k}= \frac{1}{2}\Bigl[\sum_{k=2}^n k\binom{n}{k}-\sum_{k=2}^n \binom{n}{k}\Bigr]=2^{n-2}(n-2)+\frac{1}{2} $$
Therefore for any vertex $v$, \\
$$C_B(\mathbf{v})=\mathbf{2^{n-2}(n-2)+\frac{1}{2}} $$
\end{proof}
The relative centrality and graph centrality are as follows\\$$C_{B}{'}{(v)}=\frac{2C_B(v)}{(2^n-1)(2^n-2)}=\frac{2^{n-1}(n-2)+1}{(2^n-1)(2^n-2)}$$     
$$C_B(G)=0$$
\section{Conclusion}
Betweenness centrality is known to be a useful metric for graph analysis. When compared to other centrality measures, computation of betweenness centrality  is rather difficult as it involves calculation of the shortest paths between all pairs of vertices in a graph. This study of betweenness centrality  can  be extended to larger classes of graphs and for edges also.
\bibliographystyle{plain}
\bibliography{sunilbiblio}

\begin{thebibliography}{10}

\bibitem{bavelasa}
A~Bavelas.
\newblock A mathematical model for group structure,human organization 7,
  16–30.
\newblock {\em Applied Anthropology}, 7(3):16--30, 1948.

\bibitem{biggs1993algebraic}
Norman Biggs.
\newblock {\em Algebraic graph theory}.
\newblock Cambridge University Press, 1993.

\bibitem{bonacich1987power}
Phillip Bonacich.
\newblock Power and centrality: A family of measures.
\newblock {\em American journal of sociology}, pages 1170--1182, 1987.

\bibitem{borgatti2005centrality}
Stephen~P Borgatti.
\newblock Centrality and network flow.
\newblock {\em Social networks}, 27(1):55--71, 2005.

\bibitem{borgatti2006graph}
Stephen~P Borgatti and Martin~G Everett.
\newblock A graph-theoretic perspective on centrality.
\newblock {\em Social networks}, 28(4):466--484, 2006.

\bibitem{cvetkovic̀1981generalized}
Drago{\v{s}} Cvetkovic̀, Michael Doob, and Slobodan Simic.
\newblock Generalized line graphs.
\newblock {\em Journal of Graph Theory}, 5(4):385--399, 1981.

\bibitem{estrada2006virtual}
Ernesto Estrada.
\newblock Virtual identification of essential proteins within the protein
  interaction network of yeast.
\newblock {\em Proteomics}, 6(1):35--40, 2006.

\bibitem{estrada2009communicability}
Ernesto Estrada, Desmond~J Higham, and Naomichi Hatano.
\newblock Communicability betweenness in complex networks.
\newblock {\em Physica A: Statistical Mechanics and its Applications},
  388(5):764--774, 2009.

\bibitem{foldes1977characterization}
S~Foldes.
\newblock A characterization of hypercubes.
\newblock {\em Discrete Mathematics}, 17(2):155--159, 1977.

\bibitem{freeman1977set}
Linton~C Freeman.
\newblock A set of measures of centrality based on betweenness.
\newblock {\em Sociometry}, pages 35--41, 1977.

\bibitem{freeman1979centrality}
Linton~C Freeman.
\newblock Centrality in social networks conceptual clarification.
\newblock {\em Social networks}, 1(3):215--239, 1979.

\bibitem{gago2013betweenness}
Silvia Gago, Jana~Coroni{\v{c}}ov{\'a} Hurajov{\'a}, and Tom{\'a}{\v{s}}
  Madaras.
\newblock On betweenness-uniform graphs.
\newblock {\em Czechoslovak Mathematical Journal}, 63(3):629--642, 2013.

\bibitem{harary6graph}
Frank Harary.
\newblock Graph theory. 1969.

\bibitem{hosoya1993matching}
Haruo Hosoya and Frank Harary.
\newblock On the matching properties of three fence graphs.
\newblock {\em Journal of mathematical chemistry}, 12(1):211--218, 1993.

\bibitem{koschutzki2008centrality}
Dirk Kosch{\"u}tzki and Falk Schreiber.
\newblock Centrality analysis methods for biological networks and their
  application to gene regulatory networks.
\newblock {\em Gene regulation and systems biology}, 2:193, 2008.

\bibitem{latora2007measure}
Vito Latora and Massimo Marchiori.
\newblock A measure of centrality based on network efficiency.
\newblock {\em New Journal of Physics}, 9(6):188, 2007.

\bibitem{martin2010centrality}
Ana~M Mart{\'\i}n~Gonz{\'a}lez, Bo~Dalsgaard, and Jens~M Olesen.
\newblock Centrality measures and the importance of generalist species in
  pollination networks.
\newblock {\em Ecological Complexity}, 7(1):36--43, 2010.

\bibitem{mulder1982interval}
Henry~Martyn Mulder.
\newblock Interval-regular graphs.
\newblock {\em Discrete Mathematics}, 41(3):253--269, 1982.

\bibitem{newman2001scientific}
Mark~EJ Newman.
\newblock Scientific collaboration networks. ii. shortest paths, weighted
  networks, and centrality.
\newblock {\em Physical review E}, 64(1):016132, 2001.

\bibitem{noy2004recursively}
Marc Noy and Ares Rib{\'o}.
\newblock Recursively constructible families of graphs.
\newblock {\em Advances in Applied Mathematics}, 32(1):350--363, 2004.

\bibitem{otte2002social}
Evelien Otte and Ronald Rousseau.
\newblock Social network analysis: a powerful strategy, also for the
  information sciences.
\newblock {\em Journal of information Science}, 28(6):441--453, 2002.

\bibitem{park2010social}
Kyoungjin Park and Alper Yilmaz.
\newblock A social network analysis approach to analyze road networks.
\newblock In {\em ASPRS Annual Conference. San Diego, CA}, 2010.

\bibitem{rubinov2010complex}
Mikail Rubinov and Olaf Sporns.
\newblock Complex network measures of brain connectivity: uses and
  interpretations.
\newblock {\em Neuroimage}, 52(3):1059--1069, 2010.

\bibitem{saad1988topological}
Youcef Saad and Martin~H Schultz.
\newblock Topological properties of hypercubes.
\newblock {\em Computers, IEEE Transactions on}, 37(7):867--872, 1988.

\end{thebibliography}
\end{document}